\documentclass[11pt, reqno]{amsart}

\usepackage{amssymb,latexsym,amsmath,amsfonts,amsthm}
\usepackage{mathrsfs}
\usepackage{enumitem}
\usepackage[usenames]{color}
\usepackage{hyperref}
\usepackage{comment}
\usepackage{mathtools}
\allowdisplaybreaks

\voffset = -50pt
\hoffset = -45pt
\textwidth = 15.95cm
\textheight = 9.1in 
\numberwithin{equation}{section}

\definecolor{DPurple}{rgb}{0.46,0.2,0.69}

\theoremstyle{definition}
\newtheorem{definition}{Definition}[section]

\theoremstyle{remark}
\newtheorem{remark}[definition]{Remark}

\theoremstyle{plain}
\newtheorem{theorem}[definition]{Theorem}
\newtheorem{result}[definition]{Result}
\newtheorem{lemma}[definition]{Lemma}
\newtheorem{proposition}[definition]{Proposition}
\newtheorem{example}[definition]{Example}
\newtheorem{corollary}[definition]{Corollary}

\setlength\marginparwidth{58pt}

\newcommand{\eps}{\varepsilon}

\newcommand{\zbar}{\overline{z}}
\newcommand{\wbar}{\overline{w}}
\newcommand{\tht}{\theta}

\newcommand{\bas}{\boldsymbol{\epsilon}}
\newcommand{\varz}{{\sf z}}
\newcommand{\vary}{{\sf y}}
\newcommand{\varw}{{\sf w}}
\newcommand{\unitary}{{\sf U}}

\newcommand\mscrf[1]{\widetilde{{\sf F}}_{#1}}

\newcommand{\varf}{{\sf F}}
\newcommand{\varlf}{{{\sf F}^\bullet}}
\newcommand{\tst}{\eta}
\newcommand{\auxu}{{\sf u}}

\newcommand\pd[3]{\frac{\partial^{{#3}}{#1}}{\partial{#2}}}
\newcommand\spd[2]{\partial^{{#2}}_{{#1}}}

\newcommand{\cHess}{\mathfrak{H}_{\raisebox{-2pt}{$\scriptstyle {\mathbb{C}}$}}}

\newcommand{\bdy}{\partial}

\newcommand{\D}{\mathbb{D}}
\newcommand{\Nb}{\mathcal{V}}

\newcommand{\smoo}{\mathcal{C}}
\newcommand{\hol}{\mathcal{O}}
\newcommand\leb[1]{\mathbb{L}^{{#1}}}


\newcommand{\bcdot}{\boldsymbol{\cdot}}
\newcommand{\lrarw}{\longrightarrow}
\newcommand{\rdsc}[1]{\boldsymbol{{\sf r}}_{{#1}}}
\newcommand{\univ}{\boldsymbol{{\sf u}}}
\newcommand{\Leb}{{\sf m}}
\newcommand{\distr}{\mathscr{D}^\prime}

\newcommand{\Z}{\mathbb{Z}}
\newcommand{\N}{\mathbb{N}}

\newcommand{\Cn}{\mathbb{C}^n}

\newcommand{\C}{\mathbb{C}} 
\newcommand{\R}{\mathbb{R}}

\newcommand{\re}{{\sf Re}}
\newcommand{\im}{{\sf Im}}

\newcommand{\wt}{\widetilde}

\begin{document}

\title[Connections: Kobayashi geometry \& potential theory]{On some connections between Kobayashi geometry \\
and pluripotential theory}

\author{Gautam Bharali}
\address{Department of Mathematics, Indian Institute of Science, Bangalore 560012, India}
\email{bharali@iisc.ac.in}

\author{Rumpa Masanta}
\address{Department of Mathematics, Indian Institute of Science, Bangalore 560012, India}
\email{rumpamasanta@iisc.ac.in}

\begin{abstract}
In this paper, we explore some connections between Kobayashi geometry and the Dirichlet problem for the
complex Monge--Amp{\`e}re equation. Among the results we obtain through these connections are: $(i)$~a theorem
on the continuous extension up to $\bdy{D}$ of a proper holomorphic map $F: D\lrarw \Omega$ between domains
of differing dimension, and $(ii)$~a result that establishes the existence of bounded domains with
``nice'' boundary geometry on which H{\"o}lder regularity of the solutions to the complex Monge--Amp{\`e}re
equation fails. The first, a result in Kobayashi geometry, relies upon an auxiliary construction that involves
solving the complex Monge--Amp{\`e}re equation with H{\"o}lder estimates. The second result relies crucially on
a bound for the Kobayashi metric.
\end{abstract}

\keywords{$B$-regular domains, complex geodesics, complex Monge--Amp{\`e}re equation, Kobayashi metric,
proper holomorphic maps}
\subjclass[2020]{Primary: 32F45, 32H35, 32U05; Secondary: 32T40} 

\maketitle

\vspace{-0.5cm}
\section{Introduction and statement of results}\label{S:intro}
This paper studies certain connections between Kobayashi geometry and the Dirichlet
problem for the complex Monge--Amp{\`e}re equation that are underexplored. Among the results presented in
this paper are the following that are seemingly unrelated:
\begin{itemize}[leftmargin=25pt]
  \item[(1)] A condition on the triple $(D, \Omega, F)$, $F: D\lrarw \Omega$ a proper holomorphic map between
  domains of \textbf{differing} dimension (in which case, nothing usually can be said about the boundary
  regularity of $F$) for $F$ to extend continuously up to $\bdy{D}$.
  \item[(2)] A purely Euclidean condition that\,---\,for $B$-regular domains $\Omega\Subset \Cn$, $n\geq 2$,
  typically having regular boundary\,---\,causes the failure of H{\"o}lder-regularity of the solutions
  to the Dirichlet problem, with smooth data, for the complex Monge--Amp{\`e}re equation on $\Omega$.
\end{itemize}
We refer the reader to Section~\ref{SS:M-A_prelims} for the nuances of the complex Monge--Amp{\`e}re equation
and for some (standard) terminology associated with it. Here, we just state the fact that is essential to understanding the hypotheses of Theorems~\ref{T:monge-ampere_result}
and~\ref{T:complex-geodesic}. Namely, if $\Omega\varsubsetneq \Cn$ is a $B$-regular domain\,---\,see
Section~\ref{SS:M-A_prelims} for a definition of $B$-regularity\,---\,then the following Dirichlet problem
for the complex Monge--Amp{\`e}re equation:
\begin{equation}\label{E:monge-ampere}
  \left.
  \begin{array}{r l}
  \underbrace{dd^cu\wedge\dots \wedge dd^cu}_\text{$n$ factors} =:
    (dd^c{u})^n &\mkern-9mu{= f\beta_n, \; 
    \text{ $u\in \smoo(\overline{\Omega})\cap {\sf psh}(\Omega)$},} \\
    u|_{\bdy\Omega} &\mkern-9mu{= \varphi,}
    \end{array} \right\}
\end{equation}
has a unique solution for any non-negative $f\in \smoo(\overline\Omega; \R)$ and any $\varphi\in
\smoo(\bdy\Omega; \R)$; see \cite[Theorem~4.1]{blocki:cmaohd96}. In \eqref{E:monge-ampere}, $\beta_n$ is
defined as
\[
  \beta_n := (i/2)^n(dz_1\wedge d\zbar_1)\wedge\dots \wedge (dz_n\wedge d\zbar_n).
\]

\subsection{The failure of H{\"o}lder regularity of solutions to the complex Monge--Amp{\`e}re equation}
Our first result is motivated by the observation that very little is known one way or the other,
for $B$-regular domains with $\smoo^2$-smooth boundaries, about H{\"o}lder regularity of the solutions to the
Dirichlet problem \eqref{E:monge-ampere} (even for very nice data) for domains that are \textbf{not}
strongly pseudoconvex. Our result has the implication that a bounded domain $\Omega\varsubsetneq \Cn$, $n\geq
2$, needs to satisfy a small number of geometric conditions, which are rather easy to satisfy simultaneously,
to imply that H{\"o}lder regularity cannot hold for arbitrary data as prescribed by \eqref{E:monge-ampere},
even for $\varphi: \bdy\Omega\lrarw \R$ that is highly regular. Before we state this theorem, we introduce a
notation needed for its statement. With
$\Omega$ as above, we define (here, $\D$ is the open unit disc in $\C$ with centre $0$)
\begin{equation}\label{E:geom}
  \rdsc{\Omega}(z; v) := \sup\big\{r>0: \big(z+ (r\D)v\big) \subset \Omega\big\}   
\end{equation}
for each $z\in \Omega$ and for each $v\in \C^n$ with $\|v\|=1$.
\smallskip

The focus on $B$-regularity in the previous paragraph, and in the hypothesis of the theorem below, is due to
the fact that we are assured of a unique solution to \eqref{E:monge-ampere} when $\Omega$ is $B$-regular.
With those words, we state our first result.

\begin{theorem}\label{T:monge-ampere_result}
Let $\Omega\varsubsetneq \C^n$, $n\geq 2$, be a $B$-regular domain. 
Suppose there exist a sequence $(z_{\nu})_{\nu\geq 1}\subset \Omega$, a point $\xi\in \bdy\Omega$
such that $z_{\nu}\to \xi$, and unit vectors $\univ_{\nu}\in T^{(1,0)}_{z_{\nu}}\Omega$,
$\nu = 1, 2, 3,\dots$, such that
\begin{equation}\label{E:eta-ratio_infty}
  \lim_{\nu\to \infty}\frac{\rdsc{\Omega}(z_{\nu}; \univ_{\nu}\big)}
                        {\big({\rm dist}_{Euc}(z_{\nu}, \bdy\Omega)\big)^{\alpha}} = \infty
                        \quad \forall \alpha\in (0, 1].
\end{equation}
Then, there exist functions $\varphi: \bdy\Omega\lrarw \R$ that are restrictions of
$\smoo^\infty$-smooth functions defined on neighbourhoods of $\bdy\Omega$ such that, for any non-negative
$f\in \smoo(\overline\Omega; \R)$, the unique solution to the Dirichlet problem \eqref{E:monge-ampere}
does not belong to $\smoo^{0,\alpha}(\overline{\Omega})$ for any $\alpha\in (0,1]$.
\end{theorem}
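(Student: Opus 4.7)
My plan is to argue by contradiction, choosing $\varphi$ as the boundary trace of a smooth, strictly plurisuperharmonic function and then exploiting the sub-mean value property of $u$ along the discs $\phi_\nu(\zeta) := z_\nu + r_\nu\univ_\nu\zeta$, with $r_\nu := \rdsc{\Omega}(z_\nu; \univ_\nu)$, to produce a quantitative gap at $z_\nu$ that will collide with the hypothesis \eqref{E:eta-ratio_infty}.

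Concretely, I would fix $\psi(z) := C - |z - \xi|^2$ on $\C^n$ (for any convenient constant $C$), set $\varphi := \psi|_{\bdy\Omega}$, and note that $dd^c\psi$ has constant Levi form $-I$. For any admissible $f$, the solution $u$ provided by the quoted theorem of B{\l}ocki is continuous on $\overline{\Omega}$. Since $u - \psi$ is then plurisubharmonic on $\Omega$ (as $dd^cu \geq 0$ and $-dd^c\psi \geq 0$) and vanishes on $\bdy\Omega$, the maximum principle forces $u \leq \psi$ throughout $\overline{\Omega}$. The quantitative gap will come from the discs: each $\phi_\nu$ maps $\overline{\D}$ into $\overline{\Omega}$ by the definition of $r_\nu$, so $u \circ \phi_\nu$ is subharmonic on $\D$ and continuous on $\overline{\D}$. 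The sub-mean value inequality combined with $u \leq \psi$ yields
\begin{equation*}
  u(z_\nu) \;\leq\; \frac{1}{2\pi}\int_0^{2\pi}\psi(\phi_\nu(e^{i\theta}))\,d\theta \;=\; \psi(z_\nu) - r_\nu^2,
\end{equation*}
where the final equality is a one-line computation: in the expansion of $|\phi_\nu(e^{i\theta})-\xi|^2$ the linear cross term averages to zero and the quadratic term contributes $|\univ_\nu|^2 r_\nu^2 = r_\nu^2$. Therefore $\psi(z_\nu) - u(z_\nu) \geq r_\nu^2$ for every $\nu$.

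To close the argument, I would assume for contradiction that $u \in \smoo^{0,\alpha}(\overline{\Omega})$ for some $\alpha \in (0,1]$, pick $p_\nu \in \bdy\Omega$ realizing $d_\nu := {\rm dist}_{Euc}(z_\nu,\bdy\Omega)$, and use $u(p_\nu) = \psi(p_\nu)$ together with the Lipschitz continuity of $\psi$ and the H{\"o}lder seminorm of $u$ to obtain
\begin{equation*}
  r_\nu^2 \;\leq\; |\psi(z_\nu) - \psi(p_\nu)| + |u(p_\nu) - u(z_\nu)| \;\leq\; C_1\,d_\nu^\alpha
\end{equation*}
for a constant $C_1$ and all large $\nu$, hence $r_\nu \leq \sqrt{C_1}\,d_\nu^{\alpha/2}$. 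This directly contradicts \eqref{E:eta-ratio_infty} applied with the exponent $\alpha/2 \in (0,1]$. I do not anticipate a serious obstacle: the only real conceptual point is the choice of $\varphi$. The usual $B$-regularity barriers produce \emph{plurisubharmonic} peak functions, whereas the argument above needs a \emph{plurisuperharmonic} dominator in order to extract the gap $r_\nu^2$ from the mean-value identity on affine analytic discs.
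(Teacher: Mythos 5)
Your proposal is correct, and it takes a genuinely different and more elementary route than the paper's. The paper first proves Proposition~\ref{P:monge-ampere} (a lower bound for $k_\Omega$ in terms of the modulus of continuity of the solution $u_{f,\varphi}$), relying on Richberg's regularization theorem and Sibony's estimate (Result~\ref{r:Kob_low_bd_Sibony}); it then pits that lower bound for the Kobayashi metric against the elementary upper bound $k_\Omega(z_\nu;\univ_\nu)\leq 1/\rdsc{\Omega}(z_\nu;\univ_\nu)$ obtained from the affine disc embeddings, and concludes by contradicting \eqref{E:eta-ratio_infty}. You bypass Kobayashi geometry altogether: you pick the same sort of boundary datum (the trace of a strictly plurisuperharmonic $\smoo^\infty$ function), observe $u\leq\psi$ by the maximum principle applied to the PSH function $u-\psi$, and extract the quantitative gap $\psi(z_\nu)-u(z_\nu)\geq r_\nu^2$ directly from the sub-mean value inequality for $u$ on the affine disc $\phi_\nu$ together with the explicit averaging of $\psi\circ\phi_\nu$. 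Your final chain $r_\nu^2\leq L\,d_\nu + [u]_\alpha d_\nu^\alpha \leq C_1 d_\nu^\alpha$ (for $\nu$ large) then contradicts \eqref{E:eta-ratio_infty} at exponent $\alpha/2\in(0,1]$; all the intermediate steps (that $\phi_\nu(\overline\D)\subset\overline\Omega$, that $u\circ\phi_\nu$ is continuous on $\overline\D$ and subharmonic on $\D$, that the cross term averages out) hold.

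What the paper's route buys is thematic and structural: it isolates Proposition~\ref{P:monge-ampere}, which is the engine driving Theorems~\ref{T:proper-map} and~\ref{T:complex-geodesic} as well, and it makes visible the Kobayashi-geometric obstruction advertised in Remark~\ref{rem:monge-ampere_geom}. What your route buys is a shorter, self-contained proof that avoids Richberg's theorem, Sibony's estimate, and the Kobayashi metric entirely, using only the maximum principle and the one-variable mean value inequality; it also makes the role of the hypothesis $\rdsc{\Omega}(z_\nu;\univ_\nu)^2 \gg \delta_\Omega(z_\nu)^\alpha$ completely transparent. Both proofs supply a large family of valid boundary data, with the same essential requirement (strict plurisuperharmonicity of the defining $\smoo^\infty$ function).
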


\begin{remark}\label{rem:monge-ampere_geom}
The proof of Theorem~\ref{T:monge-ampere_result} will show that Kobayashi geometry reveals an obstacle for a
domain to admit solutions to \eqref{E:monge-ampere} of H{\"o}lder class. However, the condition
\eqref{E:eta-ratio_infty} involves simple Euclidean measurements.
\end{remark}

\begin{remark}\label{rem:B-reg_geom}
The discussion referring to geometry prior to \eqref{E:geom} suggests that there must be various geometric
conditions implying $B$-regularity. This is indeed so. For $\Omega$ pseudoconvex with
$\smoo^2$-smooth boundary, \cite[Proposition~2.3]{sibony:cdp87} provides a geometric constraint on the set of
weakly pseudoconvex points of $\bdy\Omega$ for $\Omega$ to be $B$-regular. There exist geometric
criteria for $B$-regularity for various special classes of domains: see, for instance,
Proposition~\ref{P:C-strict_convex} and \cite[Proposition~3.1]{dieudunghung:brcd05}.
\end{remark}

Referring once again to the discussion prior to \eqref{E:geom}, one may ask whether the
conditions in Theorem~\ref{T:monge-ampere_result} can simultaneously be true. In this connection, we refer the
reader to Example~\ref{Eg:monge-ampere-not-holder}.%
\smallskip

Theorem~\ref{T:monge-ampere_result} stems, in part, from a connection with Kobayashi geometry\,---\,this is
one of the connections hinted at above. Its proof relies crucially on an estimate for the Kobayashi metric
given by Sibony; see Result~\ref{r:Kob_low_bd_Sibony}.
\smallskip

\subsection{Extension theorems for proper holomorphic maps}\label{SS:extension}
Our principal theorem on the extension of proper holomorphic maps is motivated by the following result.
\begin{result}[paraphrasing {\cite[Corollary~1.5]{forstneric:espdb86}} by Forstneri{\v c}]\label{R:Frostneric}
For each integer $m\geq 1$ there is a proper holomorphic embedding $F:\mathbb{B}^m\lrarw\mathbb{B}^n$,
$n=m+1+2s$ $($where $s=s(m)$ depends only on $m)$, such that $F$ does not extend continuously to
$\overline{\mathbb{B}^m}$.
\end{result}
There is an extensive literature on the continuous extension up to $\bdy D$, whether local or global, of
proper holomorphic maps $F:D\lrarw\Omega$ when $D,\Omega\varsubsetneq\Cn$; i.e., when
$\dim_\C(D)=\dim_\C(\Omega)$.
In this case, often the geometry (resp., local geometry) of $\bdy D$ and $\bdy \Omega$ suffices to ensure
continuous extension of $F$ up to $\bdy D$ (resp., locally); see, for instance, 
\cite{henkin:apnhespd73, pinchuk:phmspd74,diederichfornaess:phmopdwrab79, forstnericrosay:lkmtbcphm87,
berteloot:rlcephm92, sukhov:obrhm94, banik:lcephmlritb24}. Result~\ref{R:Frostneric} suggests that the
situation is starkly different when $\dim_\C(D)<\dim_\C(\Omega)$; also see \cite{low:ephmspdipb85,
dor:phmbboc90}. It is thus natural to ask: \emph{if $F: \mathbb{B}^m\lrarw\mathbb{B}^n$ is a proper
holomorphic map and $m<n$, then what conditions on $F$ would ensure that $F$ extends continuously up to
$\bdy{\mathbb{B}^m}$?} In this setting, owing to the special geometry of Euclidean balls, the focus of
research has been on seeking further information on, or to classify, such $F$\,---\,i.e., on so-called
``rigidity properties'' of $F$; see, for instance, 
\cite{faran:tlphmbbtlcc86, dangelo:phmbbd88, huang:olppmbbcsdd99, huangJi:mbib01, huangJiyin:otgphmbb14}. Such ``rigidity'' theorems need to assume some \emph{a priori} boundary regularity of $F$; this
suggests that the natural question posed above is also a challenging one.
With this as motivation, we wish to investigate\,---\,and not just for mappings between balls\,---\,what
interior conditions on $F:D\lrarw\Omega$, $F$ proper holomorphic and
$\dim_\C(D)<\dim_\C(\Omega)$, enable one to deduce continuous extension of $F$ up to $\bdy D$.
(If $\dim_\C(D)>\dim_\C(\Omega)$, then a holomorphic map $F: D\lrarw \Omega$ is never proper.) Our
first result addresses this problem: it covers not just the Euclidean unit balls but all pairs $(D,\Omega)$
of bounded strongly pseudoconvex domains with $\dim_\C(D)<\dim_\C(\Omega)$ and presents a rather permissive
interior condition on $F$.

\begin{theorem}\label{T:proper-map_str-pseud}
Let $D\varsubsetneq \C^m$ and $\Omega\varsubsetneq \C^n$ be bounded strongly pseudoconvex domains with $m < n$.
Let $F: D\lrarw \Omega$ be a proper holomorphic map, and assume that there exists some $p>m$ such that
\[
  \pd{F_{\mu}}{z_{j}}{}\,\overline{\pd{F_{\nu}}{z_{k}}{}} \in \leb{p}(D, \Leb_{2m})
\]
$($where $\Leb_{2m}$ denotes the $2m$-dimensional Lebesgue measure$)$ for each $j, k: 1\leq j, k\leq m$
and each $\mu, \nu: 1\leq \mu, \nu\leq n$. Then $F$ extends as a continuous map
$\widetilde{F}: (\overline{D}, \bdy{D})\lrarw (\overline{\Omega}, \bdy{\Omega})$.
\end{theorem}

The above is, to the best of our knowledge, the first continuous-extension result for $F$ where $\dim_{\C}(D) < 
\dim_{\C}(\Omega)$ and is not stated for specific examples of $(D, \Omega)$. But we have a result 
that applies to an even larger class of pairs $(D,\Omega)$, $D\varsubsetneq\C^m$ and $\Omega\varsubsetneq\C^n$,
$m,n\in\Z_+$, with $m<n$, of which Theorem~\ref{T:proper-map_str-pseud} is a special case. We refer the reader
to Section~\ref{S:examples} for the definition of the two classes of domains that feature in the following theorem.
What is more useful to see at this point is that domains of either kind are abundant and that each class contains
all bounded strongly pseudoconvex domains; see \cite[Section~2]{charabati:hrscmae15}. With those words, we
present:   
 
\begin{theorem}\label{T:proper-map}
Let $D\varsubsetneq \C^m$ be a strongly hyperconvex Lipschitz domain such that $\bdy{D}$ is a
Lipschitz manifold, let $\Omega\varsubsetneq \C^n$ be a regular strongly hyperconvex Lipschitz domain
such that $\bdy{\Omega}$ is a Lipschitz manifold, and suppose $m < n$. Let $F: D\lrarw \Omega$ be a
proper holomorphic map, and assume that there exists some $p>m$ such that
\begin{equation}\label{E:F_condn}
  \pd{F_{\mu}}{z_{j}}{}\,\overline{\pd{F_{\nu}}{z_{k}}{}} \in \leb{p}(D, \Leb_{2m}).
\end{equation}
for each $j, k: 1\leq j, k\leq m$
and each $\mu, \nu: 1\leq \mu, \nu\leq n$. Then $F$ extends as a continuous map
$\widetilde{F}: (\overline{D}, \bdy{D})\lrarw (\overline{\Omega}, \bdy{\Omega})$.
\end{theorem}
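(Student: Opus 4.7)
The plan is to reduce the extension problem to Sobolev regularity of the components of $F$: the $\leb{p}$ hypothesis places each $F_\mu$ in a Sobolev space whose trace on $\bdy{D}$ is H\"older, and properness of $F$ identifies the boundary image with $\bdy{\Omega}$. This is the most direct route; the authors' strategy of proof evidently passes instead through a Monge--Amp\`ere construction, in the spirit hinted at by the abstract, which presents a subtle obstacle that the Sobolev route avoids.

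Specialising the hypothesis \eqref{E:F_condn} to $j=k$ and $\mu=\nu$ gives $\big|\pd{F_\mu}{z_j}{}\big|^2 \in \leb{p}(D)$, so $\big|\pd{F_\mu}{z_j}{}\big| \in \leb{2p}(D)$ for every $j$ and $\mu$. Since each $F_\mu$ is holomorphic, a direct computation shows that the real gradient satisfies $|\nabla F_\mu|^2 = 2\sum_j \big|\pd{F_\mu}{z_j}{}\big|^2$, and hence $|\nabla F_\mu| \in \leb{2p}(D)$. Combined with $F_\mu \in \leb{\infty}(D)$, which holds because $F(D)\subset\Omega$ and $\Omega$ is bounded, we obtain $F_\mu \in W^{1,2p}(D)$. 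Since $\bdy{D}$ is a Lipschitz manifold, $D$ is a Lipschitz domain in the sense required for Sobolev embeddings, and as $2p > 2m = \dim_{\R}{D}$, Morrey's embedding $W^{1,2p}(D) \hookrightarrow \smoo^{0,\,1-m/p}(\overline{D})$ produces a H\"older-continuous extension $\widetilde{F}_\mu$ of each component. Assembling these gives a H\"older extension $\widetilde{F} := (\widetilde{F}_1,\dots,\widetilde{F}_n)$ of $F$. Properness forces $\widetilde{F}(\bdy{D})\subseteq\bdy{\Omega}$: if some $\widetilde{F}(z_0)$ were to lie in $\Omega$ for $z_0\in\bdy{D}$, a small closed ball about $\widetilde{F}(z_0)$ inside $\Omega$ would have compact $F$-preimage in $D$ yet accumulate at $z_0 \in \bdy{D}$, a contradiction.

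To instead follow the authors' Monge--Amp\`ere plan, one would set $u := \rho_\Omega\circ F$ and compute $(dd^c u)^m = F^*\!\big((dd^c\rho_\Omega)^m\big) = g\,\beta_m$, where $g$ is a finite sum of products of $m$ of the pairs $\pd{F_\mu}{z_j}{}\,\overline{\pd{F_\nu}{z_k}{}}$ weighted by the $\leb{\infty}$ coefficients of $dd^c\rho_\Omega$ supplied by the regular strongly hyperconvex Lipschitz hypothesis on $\Omega$. The generalised H\"older inequality then yields $g\in\leb{p/m}(D)$ with $p/m>1$, and a Ko\l{}odziej-style H\"older regularity theorem for the complex Monge--Amp\`ere equation on the strongly hyperconvex Lipschitz domain $D$ delivers $u\in\smoo^{0,\alpha}(\overline{D})$ for some $\alpha>0$. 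The principal difficulty along this route is to upgrade H\"older control of the single composite $u$ to H\"older control of $F$ itself: running the same argument with perturbed defining functions such as $\rho_\Omega + \eps|{\cdot}-a|^2$ for $a\in\C^n$, and polarising in $a$, would in principle produce H\"older continuity of each $F_\mu$, but the perturbed pullbacks have a priori ill-determined boundary values depending on the very extension of $F$ one is trying to establish. I expect this bootstrap, carried out via the comparison principle and the full strength of H\"older Monge--Amp\`ere estimates for regular strongly hyperconvex Lipschitz domains, to be where the substantive work in the authors' proof lies.
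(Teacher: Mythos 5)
Your Sobolev-embedding argument is correct and is a genuinely different, and substantially more elementary, route than the paper's. You are right that specializing \eqref{E:F_condn} to $j=k$, $\mu=\nu$ gives $\bigl|\pd{F_\mu}{z_j}{}\bigr|^2\in\leb{p}(D)$, hence $\pd{F_\mu}{z_j}{}\in\leb{2p}(D)$; since $F_\mu$ is holomorphic its $\bar z_j$-derivatives vanish, so the full real gradient of (the real and imaginary parts of) $F_\mu$ lies in $\leb{2p}(D)$, and with $F_\mu\in\leb{\infty}(D)$ this places $F_\mu\in W^{1,2p}(D)$. A Lipschitz manifold boundary in the sense of Definition~\ref{D:lipschitz} is a Lipschitz domain in the classical real-analysis sense (unitary maps being a subclass of orthogonal ones), so Morrey's embedding applies with $2p>2m=\dim_{\R}D$ and yields a $\smoo^{0,1-m/p}(\overline D)$-representative which, agreeing a.e.\ with the continuous $F_\mu$ on $D$, \emph{is} the desired extension; properness then forces $\widetilde{F}(\bdy D)\subset\bdy\Omega$ exactly as you argue. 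What this buys: your proof uses only the Lipschitz-manifold hypothesis on $\bdy D$ and boundedness of $\Omega$, so it dispenses with both strongly-hyperconvex-Lipschitz hypotheses and with the Lipschitz regularity of $\bdy\Omega$, and it produces an explicit H{\"o}lder exponent $1-m/p$. The paper's three-step proof instead (i) lower-bounds $k_\Omega$ via a Monge--Amp{\`e}re potential on $\Omega$ and Sibony's estimate to get $\|F'(z)v\|\lesssim \|v\|\,\delta_\Omega(F(z))^{s/2}/\delta_D(z)$, (ii) shows $\rho\circ F$ solves a Monge--Amp{\`e}re equation on $D$ with $\leb{p/m}$-datum, invokes Charabati's H{\"o}lder regularity and a plurisubharmonic Hopf lemma to get $\delta_\Omega(F(z))\lesssim\delta_D(z)^{s_*}$, and (iii) integrates the resulting $\|F'(z)\|\lesssim\delta_D(z)^{-\tilde s}$ (with $\tilde s<1$) by a Hardy--Littlewood argument. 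Note that $p>m$ plays a different role in each proof: in yours it is Morrey's threshold, while in the paper's it ensures $\widetilde f\in\leb{p/m}$ with $p/m>1$, the threshold for Charabati's theorem. Your closing speculation about the authors' route is not quite what happens: the paper never attempts to recover $F$ from $\rho\circ F$ by polarization; $\rho\circ F$ is used only to control $\delta_\Omega\circ F$, which then feeds into a derivative bound that is integrated along normal segments.
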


We assume that readers are familiar with the notion of boundaries of open sets as Lipschitz manifolds, but
we provide a definition in Section~\ref{S:geom}.
\smallskip

It is atypical for a pair of domains $(D, \Omega)$ with $\dim_{\C}(D)\leq
\dim_{\C}(\Omega)$ to admit any proper holomorphic map $F : D\lrarw \Omega$. As $D$ and
$\Omega$ in Theorem~\ref{T:proper-map} and $F: D\lrarw \Omega$ must satisfy several conditions,
the question arises: \emph{does there exist any pair $(D,\Omega)$ satisfying these conditions that admits
a proper holomorphic map $F : D\lrarw \Omega$ that satisfies \eqref{E:F_condn}?} (Such questions are left
unanswered in a lot of the literature on the extension of proper holomorphic maps.) With some care,
one can find many examples of triples $(D,\Omega,F)$, even examples where $D$ and $\Omega$ are not strongly
pseudoconvex, that satisfy all the conditions of Theorem~\ref{T:proper-map}. We present such an
example in Section~\ref{S:examples}.
\smallskip

Our proof of Theorem~\ref{T:proper-map}
starts off on an idea of Diederich--Fornaess in
\cite{diederichfornaess:phmopdwrab79}, which involves controlling the operator norm of $F'(z)$, $F$ as
above, as $z$ approaches $\bdy{D}$. In \cite{diederichfornaess:phmopdwrab79} and in
our proof, this relies, in part, on an estimate for the Kobayashi metric on $\Omega$. Thus,
Theorem~\ref{T:proper-map} lies in the realm of Kobayashi geometry. Unlike in 
\cite{diederichfornaess:phmopdwrab79}, $\bdy\Omega$ can be quite rough, but our conditions on $\Omega$ enable
us to appeal to results on the regularity of the solutions to the Dirichlet problem \eqref{E:monge-ampere}
to estimate the Kobayashi metric on $\Omega$. This new approach is the content of Proposition~\ref{P:monge-ampere}
below. But the chief novelty of Theorem~\ref{T:proper-map} lies in dealing with 
the condition
$\dim_\C(D)<\dim_\C(\Omega)$. In many of the works cited above in which $\dim_\C(D) =\dim_\C(\Omega)$, the
above-mentioned control of $\|F'(\bcdot)\|$ also involves the use of a Hopf-type lemma applied to an
auxiliary plurisubharmonic function on $D$. This function is not definable when 
$\dim_\C(D) < \dim_\C(\Omega)$, and the construction of a suitable substitute thereof requires a second, and
careful, appeal to a complex Monge--Amp{\`e}re equation wherein the datum $f\geq 0$ is far less regular than
in \eqref{E:monge-ampere} (see Step~2 of the content of Section~\ref{S:proper-map_proof}). This is yet
another of the connections alluded to at the top of this section. 
\smallskip

Whether there exist any proper holomorphic maps from $D$ to $\Omega$ is no longer an issue when $D = \D$ and 
$\Omega$ is a bounded convex domain. This observation is
due to Lempert when $\Omega$ is strictly convex and has $\smoo^3$-smooth boundary \cite{lempert:mKrdb81}
and to Royden--Wong in the general case \cite{roydenWong:CKmcd83}. For $\Omega$ as in either of these works,
they establish that given any two distinct points $w_1, w_2\in \Omega$, there exists a complex geodesic
$\psi: \D\lrarw \Omega$ such that $w_1, w_2\in \psi(\D)$. A holomorphic map $\psi: \D\lrarw
\Omega$ is called a \emph{complex geodesic} if it is an isometry for the Kobayashi distances on $\D$ and
$\Omega$. Clearly, any complex geodesic is a proper holomorphic map. Whether a complex
geodesic extends as a continuous map on $\overline{\D}$ is a subtle question. This
question was first considered in \cite{lempert:mKrdb81} and answered in the affirmative when $\Omega$ is
as in \cite{lempert:mKrdb81} and is strongly convex. To go beyond the strongly convex case, one needs the
following

\begin{definition}\label{D:C-str-cvx}
A convex domain $\Omega\varsubsetneq \Cn$ is said to be \emph{$\C$-strictly convex} if, for each $p\in
\bdy\Omega$, there exists a support hyperplane of $\Omega$ containing $p$\,---\,denote it as 
$\mathcal{H}_p$\,---\,such that the $\C$-affine hyperplane
\[
  \widetilde{\mathcal{H}}_p :=p+\big((\mathcal{H}_p-p)\cap i(\mathcal{H}_p-p)\big)
\]
satisfies $\widetilde{\mathcal{H}}_p\cap\overline\Omega=\{p\}$.
\end{definition}

The above definition is relevant because of an example by Bharali of a bounded convex domain with 
$\smoo^\infty$-smooth boundary that admits complex geodesics that do \textbf{not} extend as continuous maps on
$\overline{\D}$ \cite[Example~1.2]{bharali:cgtbrhltl16}. The domain in this example is not $\C$-strictly convex.
\smallskip

For $\C$-strictly convex domains, there have been several recent results establishing the continuous extension of
complex geodesics up to $\bdy\D$; see \cite{bharali:cgtbrhltl16, zimmer:cdtlstag17, maitra:ceKi20}. In all
these works, with $\Omega$ as in Definition~\ref{D:C-str-cvx}, $\Omega$ is either assumed to have boundary that
is strictly more regular than $\smoo^1$ or, when $\bdy\Omega$ is $\smoo^1$-smooth, to satisfy a condition
stronger than $\C$-strict convexity. We follow an approach different from these works to establish a
continuous-extension result where $\Omega$ is \textbf{not} assumed to have $\smoo^1$-smooth boundary.

\begin{theorem}\label{T:complex-geodesic}
Let $\Omega\varsubsetneq\Cn$ be a bounded $\C$-strictly convex domain.
\begin{itemize}[leftmargin=25pt]
  \item[$(a)$] Then, $\Omega$ is $B$-regular.
  \item[$(b)$] Assume that the canonical function for $\Omega$ admits a modulus of continuity $\omega$ such that
  $\sqrt{\omega}$ satisfies a Dini condition. Then, every complex geodesic of $\Omega$ extends as a
  continuous map on $\overline{\D}$.
\end{itemize}
\end{theorem}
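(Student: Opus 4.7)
My plan is to address the two parts in turn, making use throughout of the complex affine functional $L_p:\Cn\to\C$ with $L_p(p)=0$, $\re L_p\le 0$ on $\overline\Omega$, and $L_p^{-1}(0)\cap\overline\Omega=\{p\}$ that is supplied at each $p\in\bdy\Omega$ by Definition~\ref{D:C-str-cvx}.

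For part $(a)$, I would deduce $B$-regularity from the absence of non-constant holomorphic discs in $\bdy\Omega$: any holomorphic disc $\sigma:\D\to\bdy\Omega$ with $\sigma(0)=p$ lies in $\mathcal{H}_p$ by convexity and, being holomorphic, in the maximal complex subspace $\widetilde{\mathcal{H}}_p$ through $p$; by $\C$-strict convexity $\widetilde{\mathcal{H}}_p\cap\overline\Omega=\{p\}$, forcing $\sigma$ to be constant. Concretely, I would construct at each $p\in\bdy\Omega$ a continuous plurisubharmonic peak function $\vartheta_p$ (with $\vartheta_p(p)=0$ and $\vartheta_p<0$ on $\overline\Omega\setminus\{p\}$) by combining the pluriharmonic function $\re L_p$\,---\,whose zero set is the convex face $F_p:=\mathcal{H}_p\cap\overline\Omega$\,---\,with a countable family $\{\re L_{q_n}\}$ corresponding to a dense sequence $(q_n)\subset F_p\setminus\{p\}$: the $\C$-strict convexity at each $q_n$ ensures $L_{q_n}(p)\ne 0$, which permits one to separate $p$ from $F_p\setminus\{p\}$ inside $\overline\Omega$ via a suitable weighted countable supremum, followed by upper semicontinuous regularization.

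For part $(b)$, by $(a)$ the canonical function $u_\Omega$ is defined and, by hypothesis, has a modulus of continuity $\omega$ with $\sqrt\omega$ Dini. For a complex geodesic $\psi:\D\to\Omega$ my aim is to produce the pointwise bound
\[
  \|\psi'(\zeta)\|\,\le\,\frac{C\,\sqrt{\omega\bigl(c(1-|\zeta|)\bigr)}}{1-|\zeta|},\qquad \zeta\in\D,
\]
which, once integrated along suitable paths in $\D$, yields a modulus of continuity for $\psi$ on $\D$ that extends continuously to $\overline\D$, by virtue of the Dini integrability $\int_{0}\sqrt{\omega(t)}\,t^{-1}\,dt<\infty$. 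The displayed bound will be assembled from three ingredients. First, Sibony's lower bound on the Kobayashi metric (Result~\ref{r:Kob_low_bd_Sibony}) gives $\kappa_\Omega(z;v)\ge c\|v\|/\sqrt{-u_\Omega(z)}$, which combined with the complex-geodesic identity $\kappa_\Omega(\psi(\zeta);\psi'(\zeta))=(1-|\zeta|^2)^{-1}$ yields $\|\psi'(\zeta)\|\le C\sqrt{-u_\Omega(\psi(\zeta))}/(1-|\zeta|^2)$. Second, the modulus of continuity of $u_\Omega$ and $u_\Omega|_{\bdy\Omega}=0$ deliver $-u_\Omega(\psi(\zeta))\le\omega({\rm dist}_{Euc}(\psi(\zeta),\bdy\Omega))$. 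Third, I would upgrade this to $-u_\Omega(\psi(\zeta))\le C\omega(c(1-|\zeta|))$ by a Hopf-type boundary estimate for the subharmonic function $\phi:=u_\Omega\circ\psi$ on $\D$ (which is continuous on $\overline\D$, vanishes on $\bdy\D$, and is subharmonic in the interior), leveraging the complementary Kobayashi upper bound $\kappa_\Omega(z;v)\ge\|v\|/(2\,{\rm dist}_{Euc}(z,\bdy\Omega))$ applied along $\psi$.

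I expect the main obstacle to be the third step: since $\phi=u_\Omega\circ\psi$ is only known to be continuous (not a priori $\smoo^1$) on $\overline\D$, the classical smooth Hopf lemma does not apply verbatim, and one must exploit the specific structure of the complex geodesic $\psi$ together with the Monge--Amp\`ere equation satisfied by $u_\Omega$ in order to extract the boundary estimate ${\rm dist}_{Euc}(\psi(\zeta),\bdy\Omega)\lesssim 1-|\zeta|$. Once that step is secured, the remainder of part $(b)$ is a direct consequence of Sibony's bound and the Dini hypothesis.
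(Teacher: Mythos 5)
Your overall architecture for part $(b)$ (Sibony lower bound $\to$ estimate for $\|\psi'(\zeta)\|$ $\to$ Hardy--Littlewood) matches the paper's, but there are two genuine gaps in the execution, one of which leads you to tackle a problem the paper sidesteps entirely.

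\textbf{Gap in applying Sibony's estimate.} You propose to deduce $\kappa_\Omega(z;v)\ge c\|v\|/\sqrt{-u_\Omega(z)}$ by applying Result~\ref{r:Kob_low_bd_Sibony} directly to the canonical function $u_\Omega$. But Sibony's lemma requires the negative psh function to be of class $\smoo^2$ near $z$ \emph{and} to have complex Hessian bounded below by $c\,\mathrm{Id}$. The canonical function is only continuous in general, and, being the solution of $(dd^cu)^n=0$, is maximal psh, so its complex Hessian is degenerate where it is twice differentiable; Sibony's hypothesis fails on both counts. The paper's Proposition~\ref{P:monge-ampere} handles this by first Richberg-regularizing $u_\Omega+\|\bcdot\|^2$ to a smooth psh $\Phi_2$ and then applying Sibony to $\auxu:=\Phi_2+\|\bcdot\|^2$, whose Hessian inherits strict positivity from $\|\bcdot\|^2$ and whose boundary values vanish by construction. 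The modulus of continuity of $u_\Omega$ then controls $|\auxu|$. Without this intermediary your step~1 does not go through.

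\textbf{Step~3 aims at an estimate you do not need and likely cannot get.} You correctly identify that you would need an upper bound of the form $\delta_\Omega(\psi(\zeta))\lesssim 1-|\zeta|$, and you flag the Hopf-type argument for it as the main obstacle. In fact that estimate is stronger than what the hypotheses afford: for general bounded convex domains Mercer's result (\cite{mercer:cgigmcd93}, Result~\ref{r:Mercer} in the paper) only gives $\delta_\Omega(\psi(\zeta))\le C_2(1-|\zeta|)^{1/\beta}$ for some $\beta>1$, and that is what the paper uses. The crucial observation you are missing is that the weaker exponent is harmless: after monotonicity of $\omega$ you get $\|\psi'(\zeta)\|\le C\sqrt{\omega\big(C_2(1-|\zeta|)^{1/\beta}\big)}/(1-|\zeta|)$, and the substitution $t=C_2x^{1/\beta}$ sends $\int_0\sqrt{\omega(C_2x^{1/\beta})}\,x^{-1}\,dx$ to a constant multiple of $\int_0\sqrt{\omega(t)}\,t^{-1}\,dt$, which is exactly the Dini hypothesis. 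So the power $1/\beta$ washes out and no Hopf lemma is needed. (Also note that Mercer's Hopf lemma, Result~\ref{R:hopf-lemma}, gives a \emph{lower} bound on $|\rho|$ in terms of $\delta_\Omega$, which is the wrong direction for what you want; and your claimed ``complementary Kobayashi upper bound $\kappa_\Omega(z;v)\ge\|v\|/(2\delta_\Omega(z))$'' is not available for convex domains\,---\,Graham's Result~\ref{R:graham} lower-bounds $\kappa_\Omega$ by $\|v\|/(2\rdsc{\Omega}(z;v/\|v\|))$, and $\rdsc{\Omega}$ can far exceed $\delta_\Omega$.)

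\textbf{On part $(a)$.} The peak-function strategy is the right one (via B{\l}ocki's characterization), but your explicit construction has a hole: from $L_{q_n}^{-1}(0)\cap\overline\Omega=\{q_n\}$ you only get $L_{q_n}(p)\neq 0$, not $\re L_{q_n}(p)<0$, so the family $\{\re L_{q_n}\}$ need not actually separate $p$ from $F_p\setminus\{p\}$ by a strictly negative gap; the weighted-supremum-plus-regularization step is then not clearly within reach. The paper's Proposition~\ref{P:C-strict_convex} avoids this by projecting to a complex line transverse to $\widetilde{\mathcal{H}}_p$, observing that the image is a bounded convex planar domain with $0$ on its Jordan boundary, and pulling back $\log|f\circ\psi|$ where $\psi$ is a Carath\'eodory boundary extension of a Riemann map and $f$ is a peak function for $\D$ at $1$. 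This is clean and has no countable-supremum issues; you may find it a more robust route.
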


To prove the above theorem, we appeal to some of the methods used in proving Theorem~\ref{T:proper-map} (which
reduces our proof to an appeal to a Hardy--Littlewood-type result). This is why we need to establish part~$(a)$
above. Theorem~\ref{T:complex-geodesic}-$(a)$, among other things, ensures that the following definition makes
sense.

\begin{definition}
Let $\Omega$ be a $B$-regular domain. The \emph{canonical function} of $\Omega$ is the unique solution to the
Dirichlet problem \eqref{E:monge-ampere} with $f\equiv 0$ and $\varphi := 
\left.-2\|\bcdot\|^2\right|_{\bdy\Omega}$.
\end{definition}


Since we suggested that a novelty of Theorem~\ref{T:complex-geodesic} is that $\Omega$ therein is not
assumed to have $\smoo^1$-smooth boundary, the question arises: \emph{do there exist convex domains
$\Omega\varsubsetneq \Cn$ such that $\bdy\Omega$ is not $\smoo^1$-smooth and satisfy the hypothesis of
Theorem~\ref{T:complex-geodesic}?} The following corollary of Theorem~\ref{T:complex-geodesic} indirectly
(see its proof in Section~\ref{S:complex-geodesic_proofs}) answers this question in the affirmative.

\begin{corollary}\label{C:strongly-convex-geodesic}
Let $\Omega$ be a non-empty finite intersection of bounded strongly convex domains in $\C^n$. 
Then, every complex geodesic of $\Omega$ extends as a continuous map on $\overline{\D}$.    
\end{corollary}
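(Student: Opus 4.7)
The plan is to verify the two hypotheses of Theorem~\ref{T:complex-geodesic}-$(b)$ for $\Omega$: that it is bounded and $\C$-strictly convex, and that its canonical function $u_\Omega$ admits a modulus of continuity $\omega$ with $\sqrt{\omega}$ satisfying a Dini condition.

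Write $\Omega=\Omega_1\cap\cdots\cap\Omega_k$, with each $\Omega_j$ a bounded strongly convex domain. Boundedness and convexity of $\Omega$ are immediate. To see $\C$-strict convexity, fix $p\in\bdy\Omega$ and choose $j$ so that $p\in\bdy\Omega_j$. A support hyperplane $\mathcal{H}_p$ of $\Omega_j$ at $p$ is automatically a support hyperplane of $\Omega$. Strict convexity of $\bdy\Omega_j$ yields $\mathcal{H}_p\cap\overline{\Omega_j}=\{p\}$, and since $\widetilde{\mathcal{H}}_p\subset\mathcal{H}_p$, also $\widetilde{\mathcal{H}}_p\cap\overline{\Omega_j}=\{p\}$; as $\overline{\Omega}\subset\overline{\Omega_j}$, one concludes $\widetilde{\mathcal{H}}_p\cap\overline{\Omega}=\{p\}$. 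Theorem~\ref{T:complex-geodesic}-$(a)$ then furnishes $B$-regularity of $\Omega$, so $u_\Omega$ is well-defined.

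To control the modulus of continuity of $u_\Omega$, begin by observing that $\Omega$ is a strongly hyperconvex Lipschitz domain in the sense of Definition~\ref{D:SHL-domain}-$(a)$: taking $\smoo^\infty$-smooth strongly plurisubharmonic defining functions $\rho_j$ of the $\Omega_j$'s on a common neighbourhood $U$ of $\overline\Omega$, the function $\rho:=\max(\rho_1,\ldots,\rho_k)$ is Lipschitz and plurisubharmonic on $U$, has zero set equal to $\bdy\Omega$, is negative on $\Omega$, and satisfies $dd^c\rho\geq c\,\omega_n$ as currents with $c:=\min_j c_j>0$ (this bound for the maximum of $\smoo^2$ plurisubharmonic functions being a standard positive-current computation). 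With $f\equiv 0$ and boundary datum $\varphi:=-2\|\bcdot\|^2|_{\bdy\Omega}$ (visibly $\smoo^\infty$-smooth), invoke the H\"older regularity theory for the Dirichlet problem~\eqref{E:monge-ampere} on strongly hyperconvex Lipschitz domains to conclude that $u_\Omega\in\smoo^{0,\beta}(\overline\Omega)$ for some $\beta\in(0,1]$. With $\omega(t):=Ct^\beta$, the Dini integral $\int_0^1\sqrt{\omega(t)}\,t^{-1}\,dt=\sqrt C\int_0^1 t^{\beta/2-1}\,dt$ is finite, and Theorem~\ref{T:complex-geodesic}-$(b)$ delivers the conclusion.

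The chief obstacle is the H\"older regularity of $u_\Omega$: since $\bdy\Omega$ need not be $\smoo^1$-smooth, the classical Caffarelli--Kohn--Nirenberg--Spruck theory does not apply, and one must appeal to the more recent pluripotential-theoretic H\"older estimates for the complex Monge--Amp\`ere equation on hyperconvex domains with rough boundaries (as, e.g., developed by Ko{\l}odziej, Nguyen, Charabati and collaborators).
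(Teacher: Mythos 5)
Your proposal is correct and takes essentially the same route as the paper's proof: both show that $\Omega$ is a strongly hyperconvex Lipschitz domain by taking $\rho = \max_j \rho_j$ for smooth strictly convex defining functions $\rho_j$ of the $\Omega_j$, apply the H\"older regularity result of Charabati (Result~\ref{R:charabati}) to get the canonical function into $\smoo^{0,s}(\overline\Omega)$, set $\omega(t)=t^s$, check the Dini condition, and invoke Theorem~\ref{T:complex-geodesic}-$(b)$. You merely spell out the $\C$-strict convexity verification and the $dd^c\rho\geq c\,\omega_n$ bound in more detail than the paper, which declares them elementary.
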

\smallskip

\section{Two examples}\label{S:examples}
This section is devoted to the examples mentioned in Section~\ref{S:intro}. But first, let us fix some
notation that will be used right below and in later sections.

\subsection{Basic notation} The following is a list of recurring notations (some of which were used without
comment in the previous section). In our list, $\Omega$ denotes a domain in $\Cn$.
\begin{itemize}[leftmargin=25pt]
  \item[$(1)$] For $v\in \Cn$, $\|v\|$ will denote the Euclidean norm of $v$.
  \vspace{0.45mm}
  
  \item[$(2)$] Given a non-empty set $S\subseteq \Cn$, the class of all \textbf{real}-valued
  functions that satisfy a H{\"o}lder condition with H{\"o}lder exponent $\alpha$, $\alpha\in (0,1]$
  (to clarify: a H{\"o}lder condition with H{\"o}lder exponent\,$=1$ implies a Lipschitz condition)
  will be denoted by $\smoo^{0,\alpha}(S)$.
  \vspace{0.45mm}

  \item[$(3)$] $\smoo(\overline{\Omega})\cap
  {\sf psh}(\Omega)$ will denote the class of real-valued functions $u: \overline{\Omega}\lrarw \R$
  that are continuous and such that $u|_{\Omega}$ is plurisubharmonic.
  \vspace{0.45mm}

  \item[$(4)$] For $\Omega\varsubsetneq \Cn$, given a point $z\in \Omega$,
  $\delta_{\Omega}(z)$ will denote
  \[
    {\rm dist}_{Euc}(z, \bdy\Omega) := \inf\{\|z-\xi\|: \xi\in \bdy\Omega\}
  \]

  \item[$(5)$] $k_{\Omega}$ will denote the Kobayashi (pseudo)metric on $\Omega$.
  \vspace{0.45mm}

  \item[$(6)$] Given a point $x \in \R^d$ and $r>0$, $B^d(x,r)$ will denote the open Euclidean ball in
  $\R^d$ with radius $r$ and centre $x$.
  \vspace{0.45mm}

  \item[$(7)$] Given a point $z \in \Cn$ and $r>0$, $\mathbb{B}^n(z,r)$ will denote the open Euclidean
  ball in $\Cn$ with radius $r$ and centre $z$. For simplicity, we write $\mathbb{B}^n := \mathbb{B}^n(0,1)$
  and $\D := \mathbb{B}^1(0,1)$ (which already appear in Section~\ref{S:intro}).
\end{itemize}
A final clarification: on several occasions, we will work with $\C$-valued
$\leb{\infty}$-functions on some open set $\mathscr{O}\subseteq \Cn$\,---\,i.e., functions that
are essentially bounded with respect to the Lebesgue measure. For simplicity of notation, we will denote this
class by $\leb{\infty}(\mathscr{O})$.
\smallskip

\subsection{Examples} We will first present the example alluded to prior to \eqref{E:geom} and
right after Remark~\ref{rem:B-reg_geom}. To
this end, we will need the following result:

\begin{result}[Graham, {\cite{graham:dtfhmbcd90}}]\label{R:graham}
Let $\Omega$ be a bounded convex domain in $\Cn$. Then:
\[
  \frac{\|v\|}{2\rdsc{\Omega}(z;v/\|v\|)}\leq k_\Omega(z;v)\leq
  \frac{\|v\|}{\rdsc{\Omega}(z;v/\|v\|)}\quad 
  \forall z\in\Omega \; \; \text{and} \; \; \forall v\in\Cn\setminus\{0\}.
\]
\end{result}

The following proposition will be needed in discussing Example~\ref{Eg:monge-ampere-not-holder}.

\begin{proposition}\label{P:examples-convex}
Let $\varphi:(0,+\infty)\lrarw (0,+\infty)$ be a function of class $\smoo^k$, $k\geq 2$, such that
\begin{itemize}[leftmargin=25pt]
  \item[$(a)$] $\varphi^{(j)}$ extends continuously to $0$ for $0\leq j\leq k$.
  \item[$(b)$] $\lim_{x\to 0^+} (\varphi^{(j-1)}(x)-\varphi^{(j-1)}(0))/x=\varphi^{(j)}(0)$ for $1\leq j\leq k$.
  \item[$(c)$] $\varphi^{(j)}(0)=0$ for $0\leq j\leq s$ for some $s: 1\leq s\leq k$.
  \item[$(d)$] $\varphi$ is strictly increasing and convex.
  \item[$(e)$] $\varphi(a)=1$ for some $a>0$ and the behaviour of $\varphi$ near $a$ is such that 
  \[
    \Omega_\varphi :=\Big\{z\in\Cn : \varphi(z_1\zbar_1)+\sum_{2\leq j\leq n} |z_j|^2<1\Big\}
  \]
  is a convex domain with $\smoo^k$-smooth boundary.
\end{itemize}
Then, $\Omega_\varphi$ is $B$-regular. Fix $l=2,\dots,n$. For each $\varepsilon\in (-1,-1/2)$, write
\[
  z_\varepsilon := (\!\!\!\!\underbrace{0,\dots ,0}_\text{$(l-1)$ entries}\!\!\!\!, \varepsilon,0,\dots,0).
\]
Let $\bas_1 := (1, 0,\dots, 0)$.
Then, for every
  $\varepsilon\in (-1,-1/2)$:
\begin{itemize}
  \item[$(i)$] $\big(\varphi^{-1}(\delta_{\Omega_\varphi}(z_\varepsilon))\big)^{1/2}\leq \rdsc{\Omega_\varphi}
  (z_\varepsilon;\bas_1)\leq 2\big(\varphi^{-1}(\delta_{\Omega_\varphi}(z_\varepsilon))\big)^{1/2}$.
  \item[$(ii)$] Furthermore, we have the estimate
  \[
    \frac{\|v\|}{4\big(\varphi^{-1}(\delta_{\Omega_\varphi}(z_\varepsilon))\big)^{1/2}}\leq k_{\Omega_\varphi}
    (z_\varepsilon;v)\leq
    \frac{\|v\|}{\big(\varphi^{-1}(\delta_{\Omega_\varphi}(z_\varepsilon))\big)^{1/2}}\quad\forall v\in{\rm span}_\C\{\bas_1\}. 
  \]
\end{itemize}
\end{proposition}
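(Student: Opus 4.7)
The plan is to establish $B$-regularity via $\C$-strict convexity, to compute $\rdsc{\Omega_\varphi}(z_\varepsilon;\bas_1)$ explicitly, and then to prove $(i)$ by means of a two-sided estimate on $\delta_{\Omega_\varphi}(z_\varepsilon)$; part $(ii)$ will follow immediately from $(i)$ combined with Graham's bound (Result~\ref{R:graham}).

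For $B$-regularity, I will verify that $\Omega_\varphi$ is $\C$-strictly convex and then invoke Theorem~\ref{T:complex-geodesic}(a). For a boundary point $p$ with $p_1\neq 0$, the defining function $\rho(z):=\varphi(|z_1|^2)+\sum_{j\ge 2}|z_j|^2-1$ has positive-definite complex Hessian at $p$ (as $\varphi'(|p_1|^2)>0$) and is therefore strictly plurisubharmonic there; together with the convexity of $\Omega_\varphi$ this forces the complex affine tangent hyperplane $\widetilde{\mathcal{H}}_p$ to meet $\overline{\Omega_\varphi}$ only at $p$. For $p$ with $p_1=0$ one has $\sum_{j\ge 2}|p_j|^2=1$, and a direct Cauchy--Schwarz argument on $\widetilde{\mathcal{H}}_p=\{z:\sum_{j\ge 2}\bar p_j z_j=1\}$ shows that any $z\in \widetilde{\mathcal{H}}_p\cap\overline{\Omega_\varphi}$ must satisfy $(z_2,\dots,z_n)=(p_2,\dots,p_n)$ and $\varphi(|z_1|^2)=0$, whence $z_1=0$ and $z=p$.

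For $(i)$, the slice $\zeta\mapsto z_\varepsilon+\zeta\bas_1$ lies in $\Omega_\varphi$ iff $\varphi(|\zeta|^2)<1-\varepsilon^2$, so that $\rdsc{\Omega_\varphi}(z_\varepsilon;\bas_1)=\sqrt{\varphi^{-1}(1-\varepsilon^2)}$. The first inequality of $(i)$ is equivalent, via monotonicity of $\varphi^{-1}$, to $\delta_{\Omega_\varphi}(z_\varepsilon)\le 1-\varepsilon^2$, which follows from the elementary bound $\delta_{\Omega_\varphi}(z_\varepsilon)\le \|z_\varepsilon-(-\bas_l)\|=1+\varepsilon\le 1-\varepsilon^2$ on $(-1,0)$. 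Squaring the second inequality of $(i)$ gives the equivalent claim $\varphi^{-1}(1-\varepsilon^2)\le 4\,\varphi^{-1}(\delta_{\Omega_\varphi}(z_\varepsilon))$; using $\varphi(0)=0$ and the convexity chord inequality $\varphi(4y)\ge 4\varphi(y)$, this reduces further to the lower bound $\delta_{\Omega_\varphi}(z_\varepsilon)\ge (1-\varepsilon^2)/4$. I plan to establish this lower bound by a Thullen-type convexity argument: since $\Omega_\varphi$ is convex, contains $\mathbb{B}^n(0,r_0)$ for $r_0:=\min(1,\sqrt a)$, and has $-\bas_l$ on its boundary, writing $z_\varepsilon=|\varepsilon|(-\bas_l)+(1+\varepsilon)\cdot 0$ as a convex combination yields $\mathbb{B}^n(z_\varepsilon,(1+\varepsilon)r_0)\subset\Omega_\varphi$, so that $\delta_{\Omega_\varphi}(z_\varepsilon)\ge (1+\varepsilon)r_0$; for $\varepsilon\in(-1,-1/2)$ an elementary computation then gives $(1+\varepsilon)r_0\ge (1-\varepsilon^2)/4$.

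The principal difficulty I anticipate lies in this lower bound on $\delta_{\Omega_\varphi}(z_\varepsilon)$: the Thullen-type estimate depends delicately on the inscribed radius $r_0=\min(1,\sqrt a)$, and when $a$ is small a refinement is needed, for instance by bounding $\rho(w)$ over $w\in\mathbb{B}^n(z_\varepsilon,(1-\varepsilon^2)/4)$ directly, using the convex chord inequality $\varphi(x)\le x/a$ on $[0,a]$ together with the estimate $(r+|\varepsilon|)^2<1$ for $r\in(0,1+\varepsilon)$. Finally, $(ii)$ is an immediate consequence of $(i)$: Result~\ref{R:graham} applied to the bounded convex domain $\Omega_\varphi$ gives $\|v\|/(2\rdsc{\Omega_\varphi}(z_\varepsilon;\bas_1))\le k_{\Omega_\varphi}(z_\varepsilon;v)\le \|v\|/\rdsc{\Omega_\varphi}(z_\varepsilon;\bas_1)$ for $v\in{\rm span}_\C\{\bas_1\}$, and substituting the two-sided bound on $\rdsc{\Omega_\varphi}(z_\varepsilon;\bas_1)$ from $(i)$ yields the claimed Kobayashi-metric estimate.
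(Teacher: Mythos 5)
Your overall plan is sound and, for part $(ii)$, coincides with the paper's (Graham's bound applied to $(i)$). The two other ingredients, however, take genuinely different routes and each has a gap. For $B$-regularity, the paper deliberately avoids the $\C$-strict-convexity route you propose (it remarks that verifying Definition~\ref{D:C-str-cvx} requires computing $T_\xi\bdy\Omega_\varphi$ at every boundary point and is ``somewhat unpleasant''), and instead uses that $\Omega_\varphi$ is a bounded convex Reinhardt domain together with the analytic-disc criterion of \cite[Proposition~3.1]{dieudunghung:brcd05}. Your Cauchy--Schwarz argument at the weakly convex points ($p_1=0$) is correct and clean. But the step at the points with $p_1\neq 0$ --- ``strictly plurisubharmonic at $p$ together with convexity forces $\widetilde{\mathcal{H}}_p\cap\overline{\Omega_\varphi}=\{p\}$'' --- is not a valid implication in general: a convex domain can be strictly pseudoconvex at $p$ and still have $\widetilde{\mathcal{H}}_p\cap\overline\Omega$ contain a whole boundary segment (e.g.\ a bounded truncation of $\{\lvert z_1\rvert^2+(\im z_2)^2<1\}$ at the point $(1,0)$). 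For $\Omega_\varphi$ one must instead use that the \emph{real} Hessian of $\rho$ is positive definite at points with $p_1\neq 0$ (strong Euclidean convexity, hence $p$ is an exposed point of $\overline{\Omega_\varphi}$). That is a genuine repair, not a cosmetic one.

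For part $(i)$, your observation that the slice gives the exact formula $\rdsc{\Omega_\varphi}(z_\varepsilon;\bas_1)=\sqrt{\varphi^{-1}(1-\varepsilon^2)}$ is a nice simplification the paper does not use; the paper instead exhibits one interior and one exterior point of the disc. The first inequality of $(i)$ is then handled correctly on your reduction $\delta_{\Omega_\varphi}(z_\varepsilon)\leq 1+\varepsilon\leq 1-\varepsilon^2$. The gap you flag yourself is real: your Thullen estimate $\delta_{\Omega_\varphi}(z_\varepsilon)\geq(1+\varepsilon)\min(1,\sqrt a)$ yields the required $\delta_{\Omega_\varphi}(z_\varepsilon)\geq(1-\varepsilon^2)/4$ only when $\min(1,\sqrt a)\geq(1-\varepsilon)/4$, i.e.\ essentially when $a\geq 1/4$, and the sketched ``refinement'' via $\varphi(x)\leq x/a$ does not obviously close the remaining range. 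This is exactly the place where the paper's proof simply asserts $\delta_{\Omega_\varphi}(z_\varepsilon)=1+\varepsilon$ without justification and then plugs it into the inequality $2\delta+\varepsilon^2\geq 1$; your Thullen argument in fact supplies a proof of that assertion precisely when $\sqrt a\geq 1$. So you have correctly identified the sensitive point, but as written neither the Thullen bound nor the proposed refinement establishes what is needed for small $a$, and your proof of the second inequality in $(i)$ is therefore incomplete in that regime.
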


\begin{remark}\label{rem:proposition-examples-convex}
The fullest strength of condition $(c)$ above is not used in our proof. 
The assertion about $\varphi^{(s)}(0)=0$ 
and $s\geq 1$  ensures that $T_\xi\bdy\Omega_\varphi$ has order of contact with $\bdy\Omega_\varphi$ at
$\xi$ greater than $2$, for any $\xi$ of the form $(0,\xi_2,\dots,\xi_n)$ such that 
$(\xi_2,\dots,\xi_n)\in\mathbb{S}^{n-1}$, the unit sphere in $\mathbb{C}^{n-1}$. 
In other words, condition $(c)$ states that
$\Omega_\varphi$ is weakly pseudoconvex.    
\end{remark}
We now provide
\begin{proof}[The proof of Proposition~\ref{P:examples-convex}]
First we will prove that $\Omega_\varphi$ is $B$-regular. If we were to appeal to
Proposition~\ref{P:C-strict_convex}, we would have to compute $T_{\xi}\bdy\Omega_{\varphi}$ for each
$\xi\in \bdy\Omega_{\varphi}$, which is
somewhat unpleasant. Instead, since $\Omega_\varphi$ is a bounded, convex, Reinhardt domain in
$\Cn$, by \cite[Proposition~3.1]{dieudunghung:brcd05} it is enough to show that $\Omega_\varphi$ does not have 
any non-constant analytic disc in $\bdy\Omega_\varphi$. If possible, let $\Psi:\D\lrarw\Cn$ be a non-constant
holomorphic map such that $\Psi(\D)\subseteq\bdy\Omega_\varphi$. Let $\Psi:=(\psi_1,\dots,\psi_n)$, 
where $\psi_i:\D\lrarw\C$ are holomorphic maps for $1\leq i\leq n$. Since $\Psi(\D)\subseteq\bdy\Omega_\varphi$, 
for every $\zeta\in\D$ we have
\[
  \varphi\big(\psi_1(\zeta)\overline{\psi_1(\zeta)}\big)+\sum_{j=2}^{n} |\psi_j(\zeta)|^2=1.
\]
Applying the operator $\bdy^2/\bdy\zeta\bdy\overline\zeta$ to both sides we get
\begin{align}\label{E:examples-convex-derivative-0}
  \varphi''(|\psi_1|^2)\bigg|\frac{\overline{\bdy\psi_1}}
  {\bdy \zeta}\psi_1\bigg|^2+\varphi'(\big|\psi_1|^2)\bigg|\frac{\bdy\psi_1}
  {\bdy \zeta}\bigg|^2+\sum_{j=2}^{n}\bigg|\frac{\bdy\psi_j}{\bdy \zeta}\bigg|^2\equiv 0\quad \text{on $\D$}.
\end{align}
From conditions $(a)$ and $(d)$ it follows that $\varphi'(x)\geq 0$ and $\varphi''(x)\geq 0$ for all
$x\geq 0$. Therefore, from \eqref{E:examples-convex-derivative-0}, we have 
\[
  \frac{\bdy\psi_j}{\bdy\zeta}\equiv 0 \quad \forall j=2,\dots,n.
\]
Therefore, $\psi_j$ is constant for every $j=2,\dots,n$. Since $\Psi$ is non-constant, $\psi_1$ must be 
non-constant. In particular, $\psi_1\not\equiv 0$ on $\D$. Thus, there exists a non-empty open set
$U\subseteq\D$ such that $|\psi_1(\zeta)|>0$ for all $\zeta\in U$. 
From \eqref{E:examples-convex-derivative-0}, and as $\varphi'(x)>0$ for $x>0$,
\[
  \bigg|\frac{\bdy\psi_1}{\bdy\zeta}\bigg|\equiv 0 \quad \text{on $U$}.
\]
Therefore, $\psi_1$ is constant on $U$. Hence, by the Identity Principle, $\psi_1$ is constant
on $\D$, which contradicts the fact that $\Psi$ is non-constant. Hence, $\Omega_\varphi$ is $B$-regular.
\smallskip

Fix $l\in\{2,\dots,n\}$. Let $\varepsilon\in (-1,-1/2)$.
Hence, $\delta_{\Omega_\varphi}(z_\varepsilon)= 1+\varepsilon$.
Fix $\lambda\in\D$. Define $w_{\lambda}:=z_\varepsilon +\big(\varphi^{-1}(\delta_{\Omega_\varphi}
(z_\varepsilon))\big)^{1/2}\lambda\bas_1=\big((\varphi^{-1}(\delta_{\Omega_\varphi}
(z_\varepsilon)))^{1/2}\lambda,0,\dots,0,\varepsilon,0,\dots,0\big)$.
Since $\varphi$ is convex and $\varphi(0)=0$, 
it follows that 
\begin{align*}
  \varphi\big(\varphi^{-1}(\delta_{\Omega_\varphi}(z_\varepsilon))|\lambda|^2\big)+\varepsilon^2
  &\leq |\lambda|^2\varphi\big(\varphi^{-1}(\delta_{\Omega_\varphi}(z_\varepsilon))\big)+\varepsilon^2\\
  &\leq \delta_{\Omega_\varphi}(z_\varepsilon)+\varepsilon^2\leq 1+\varepsilon+\varepsilon^2<1.
\end{align*}
Therefore, $w_{\lambda}\in\Omega_\varphi$ for all $\lambda\in\D$. Hence,
for all
$\varepsilon\in(-1,-1/2)$ 
it follows by definition that 
\begin{align}\label{E:lower-bound-r}
\big(\varphi^{-1}(\delta_{\Omega_\varphi}(z_\varepsilon))\big)^{1/2}\leq\rdsc{\Omega_\varphi}
(z_\varepsilon;\bas_1). 
\end{align}
Now, fix $\lambda=1/\sqrt{2}$ and consider the point
\[
  \widehat w:= z_\varepsilon +\big(2\varphi^{-1}(2\delta_{\Omega_\varphi}(z_\varepsilon))\big)^{1/2}
  (1/\sqrt{2})\bas_1
  =\big((\varphi^{-1}(2\delta_{\Omega_\varphi}(z_\varepsilon)))^{1/2},0,\dots,0,\varepsilon,0,\dots,0\big).
\]
We compute:
\begin{align*}
  \varphi\big(\varphi^{-1}(2\delta_{\Omega_\varphi}(z_\varepsilon))\big)+\varepsilon^2&=2\delta_{\Omega_\varphi}
  (z_\varepsilon)+\varepsilon^2\\&=2(1+\varepsilon)+\varepsilon^2=1+(1+\varepsilon)^2>1 
\end{align*}
By our definition of $\Omega_\varphi$, $z_\varepsilon +\big(2\varphi^{-1}(2\delta_{\Omega_\varphi}(z_\varepsilon))\big)^{1/2}
  (1/\sqrt{2})\bas_1\notin\Omega_\varphi$. Therefore, by definition we have 
$\rdsc{\Omega_\varphi}(z_\varepsilon;\bas_1)\leq\big(2\varphi^{-1}(2\delta_{\Omega_\varphi}
(z_\varepsilon))\big)^{1/2}$. By condition $(d)$, 
$\varphi^{-1}$ is concave. Hence, from the fact that $\varphi(0)=0$, it follows that $\varphi^{-1}(2t)\leq
2\varphi^{-1}(t)$ for all $t\geq 0$.
Therefore, for all $\varepsilon\in(-1,-1/2)$ 
\begin{align}\label{E:upper-bound-r}
\rdsc{\Omega_\varphi}(z_\varepsilon;\bas_1)\leq\big(2\varphi^{-1}(2\delta_{\Omega_\varphi}
(z_\varepsilon))\big)^{1/2}\leq 2\big(\varphi^{-1}(\delta_{\Omega_\varphi}(z_\varepsilon))\big)^{1/2}.
\end{align}
Hence, combining \eqref{E:lower-bound-r} and \eqref{E:upper-bound-r}, part~$(i)$ follows.
\smallskip

Therefore, by Result~\ref{R:graham}, for every
  $\varepsilon\in (-1,-1/2)$ we have
\[
  \frac{\|v\|}{4\big(\varphi^{-1}(\delta_{\Omega_\varphi}(z_\varepsilon))\big)^{1/2}}\leq 
  k_{\Omega_\varphi}(z_\varepsilon;v)
  \leq
  \frac{\|v\|}{\big(\varphi^{-1}(\delta_{\Omega_\varphi}(z_\varepsilon))\big)^{1/2}}\quad\forall v\in{\rm span}_\C\{\bas_1\}. 
  \]
\end{proof}

Having established the above proposition, we can present a family of domains that illustrate how abundant
domains satisfying the conditions of Theorem~\ref{T:monge-ampere_result} are. This substantiates the discussion
right after Remark~\ref{rem:B-reg_geom}.

\begin{example}\label{Eg:monge-ampere-not-holder}
An example demonstrating that there exists a sequence of domains $\Omega_n$, $n\geq 2$, satisfying all the
conditions stated in Theorem~\ref{T:monge-ampere_result}.  
\end{example}

\noindent Let us define the function $\varphi:[0,+\infty)\lrarw[0,+\infty)$ as
$$\varphi(x)\coloneqq \begin{cases}
  0, & \text{if }x=0,\\
  e^2\exp(-1/x)/3,&\text{if }0<x<1/2,\\
  (4x-1)/3,  &\text{otherwise}.\end{cases}$$
The conditions $(a)$--$(c)$ in Proposition~\ref{P:examples-convex}, with $k=2$ and $s=2$, follow from elementary computations and that
$\varphi$ is strictly increasing is almost self-evident. Note that the function $x\longmapsto e^2\exp(-1/x)/3$ is not
convex on the whole of $(0,+\infty)$. One restricts the latter function to $(0,1/2)$ to ensure
convexity of $\varphi$. Now, we will show that $\varphi$ is convex on $(0,+\infty)$. It is easy to check that
$\varphi$ is $\smoo^2$ on $(0,+\infty)$. A computation gives
$$\varphi''(x)= \begin{cases}
  e^2\exp(-1/x)(1-2x)/3x^4, & \text{if } 0<x<1/2,\\
  0, &\text{otherwise.}
  
\end{cases}$$
This implies that $\varphi''\geq 0$, and hence, $\varphi$ is convex.
\smallskip

Now, set $\Omega_n:=\Big\{z\in\Cn: \varphi(z_1\overline z_1)+\sum_{2\leq j\leq n} |z_j|^2<1\Big\}$.
Since $\varphi(1)=1$, we have $(1,0,\dots,0)\in\bdy\Omega_n$. From the definition of $\varphi$, it follows that in a
neighbourhood of $(1,0,\dots,0)$, $\bdy\Omega_n$ is given by
\[
  |z_1|=\sqrt{1-\Big(3\sum_{2\leq j\leq n} |z_j|^2/4\Big)}.
\]
Therefore, $\bdy\Omega_n$ is $\smoo^\infty$-smooth in a neighbourhood of $(1,0,\dots,0)$. Since $\Omega_n$ is
Reinhardt, the above holds true in a neighbourhood of any $(e^{i\theta},0,\dots,0)\in\bdy\Omega_n$, where
$\theta\in[0,2\pi)$. This, together with the conclusions of the previous paragraph
implies that $\bdy\Omega_n$ is $\smoo^2$-smooth. Writing $z_1 = x_1+ iy_1$,
the real Hessian of $z_1\longmapsto \varphi(z_1\zbar_1)$ is
\[
  \begin{bmatrix}
      \,4x^2_1\varphi''(z_1\zbar_1) 
      + 2\varphi'(z_1\zbar_1) & 4x_1y_1\varphi''(z_1\zbar_1) \\
      4x_1y_1\varphi''(z_1\zbar_1) &
      4y^2_1\varphi''(z_1\zbar_1) 
      + 2\varphi'(z_1\zbar_1)\,
  \end{bmatrix}.
\]      
Thus, the real Hessian of the function
$z\longmapsto \varphi(z_1\overline z_1) + \sum_{2\leq j\leq n} |z_j|^2$ is
positive semi-definite, due to which the second fundamental form of $\bdy\Omega$
is non-negative at each $\xi\in \bdy\Omega_n$. Thus, the condition $(e)$ holds true
as well (with $a=1$).
\smallskip

Since all the conditions stated in Proposition~\ref{P:examples-convex} are satisfied, we need to
establish one last condition. Let
\[
  z_{\nu} := (0, -1+ 1/(\nu + 2), 0,\dots, 0) \quad\text{and}
  \quad \univ_{\nu} := \bas_1,
\]
for $\nu = 1, 2, 3,\dots$\,. By Proposition~\ref{P:examples-convex}, we have
\[
  \rdsc{\Omega_n}(z_{\nu}; \univ_{\nu}) \geq
  \frac{1}{\big(\log(e^{2}/3\delta_{\Omega_n}(z_{\nu}))\big)^{1/2}}
\]
for $\nu = 1, 2, 3,\dots$; clearly, \eqref{E:eta-ratio_infty} holds true. Therefore, $\Omega_n$ satisfies all
the conditions stated in Theorem~\ref{T:monge-ampere_result}. \hfill$\blacktriangleleft$
\smallskip

We now provide an example that relates to the discussion right after Theorem~\ref{T:proper-map}. In
particular, our example demonstrates that Theorem~\ref{T:proper-map} is \textbf{not} vacuously true.
This is therefore an appropriate place to present a pair of definitions that were deferred to a later
section in our discussion in Section~\ref{SS:extension}.

\begin{definition}\label{D:SHL-domain}
Let $\Omega\varsubsetneq \Cn$ be a bounded domain in $\Cn$.
\begin{itemize}[leftmargin=25pt]
  \item[$(a)$] (Charabati, \cite{charabati:hrscmae15}) We say that $\Omega$ is a \emph{strongly hyperconvex Lipschitz domain} if there exists a
  neighbourhood $U$ of $\overline{\Omega}$ and a Lipschitz plurisubharmonic function $\rho: U\lrarw \R$ such that: 
  \begin{itemize}
      \item[$(1)$] $\rho < 0$ in $\Omega$ and $\bdy\Omega = \{z\in U: \rho(z) = 0\}$.
      \item[$(2)$] There exists a constant $c>0$ such that $dd^c\rho \geq c\omega_n$ on $\Omega$ in the sense of
      currents.
  \end{itemize}
  Here, $\omega_n$ denotes the standard K{\"a}hler form, $(i/2)\sum_{j=1}^n dz_j\wedge d\zbar_j$, on $\Cn$. \smallskip
  
  \item[$(b)$] We say that $\Omega$ is a \emph{regular strongly hyperconvex Lipschitz domain} if $\Omega$ is
  a strongly hyperconvex Lipschitz domain and if, writing $dd^c\rho = \sum\nolimits_{\mu,\nu=1}^n
  b_{\mu\overline{\nu}}\,dz_{\mu}\wedge d\zbar_{\nu}$, each $b_{\mu\overline{\nu}}\in 
  \leb{\infty}(\Omega)$.
\end{itemize}
\end{definition}

\begin{example}\label{Eg:proper-map}
An example demonstrating that there exist domains $D\varsubsetneq \mathbb{C}^m$, $\Omega\varsubsetneq \Cn$,
$m<n$, and a proper holomorphic map
$F:D\lrarw\Omega$ such that
\begin{itemize}
  \item $D$, $\Omega$, and $F$ satisfy the conditions of Theorem~\ref{T:proper-map},
  \item $D$, $\Omega$ have non-smooth boundaries.
\end{itemize}
\end{example}

\noindent Express $\zeta\in\C$ or $z_2$, where $(z_1, z_2)\in \C^2$,
as $u+iv$, $u,v\in\R$, and define $h(\zeta)=2u^2-\beta(v)+v^4$, where 
$$\beta(v)\coloneqq \begin{cases}
  v^2, & \text{if }v\geq 0,\\
  -v^2,&\text{if }v<0.\end{cases}$$
Therefore,
\begin{align}
  h(\zeta) &= 2u^2+(v^2-1/2)^2-1/4 \; \;\text{whenever $v\geq 0$}, \label{E:sum_of_sq} \\
  h(\zeta) &\geq -1/4 \quad \forall \zeta\in \C. \label{E:h(z)}
\end{align}
Let us define 
\begin{align*}
  D&:=\{(z_1,z_2)\in\mathbb{C}^2:|z_1|^2+h(z_2)<0\},\\
  \Omega&:=\{(w_1,w_2,w_3)\in\phi(D)\times\C:|w^2_1-1|^2+h(w_2)+|w_3|^2<0\},
\end{align*}
where $\phi(z_1,z_2):=(\sqrt{z_1+1},z_2)$, $(z_1,z_2)\in D$, and where $\sqrt{\bcdot}$ denotes 
the principal branch of the square root. We need to argue that $\phi\in\hol(D;\C^2)$. 
We will show that $D$ is bounded, a consequence of which will be that $\phi\in\hol(D;\C^2)$. 
Define the continuous functions $\rho_1:\C^2\lrarw\R$ and $\rho_2:\mathbb{C}^3\lrarw\R$ by
\[
  \rho_1(z_1,z_2)=|z_1|^2+h(z_2),\quad\rho_2(z_1,z_2,z_3)=|z^2_1-1|^2+h(z_2)+|z_3|^2.
\]
By definition, $(z_1,z_2)\in \overline{D}$ implies that $\im(z_2)\nless 0$. Therefore, 
$D\subseteq\{(z_1,z_2)\in\mathbb{C}^2:|z_1|^2+2(\re (z_2))^2+((\im(z_2))^2-1/2)^2<1/4\}$ and, hence, $D$ is a 
bounded domain in $\mathbb{C}^2$. Moreover, $(z_1,z_2)\in D$ implies that $|z_1|<1/2$.
Therefore, $\phi$ is holomorphic on $D$. 
\smallskip

It is easy to compute that for every $(u,v)\in\R^2$,
\begin{equation}\label{E:w_partials}
  \frac{\bdy^2h}{\bdy u^2}(u,v)=4 \quad\text{and}\quad
  \frac{\bdy^2h}{\bdy v^2}(u,v)\,=\,
  \begin{cases}
    -2+12v^2, & \text{if }v\geq 0,\\
    2+12v^2, &\text{if }v<0.
  \end{cases}
\end{equation}
Note that $\rho_1|_{\{(z_1, z_2)\,:\,\im(z_2)>0\}}$ and
$\rho_1|_{\{(z_1, z_2)\,:\,\im(z_2)<0\}}$ are of class $\smoo^2$. From this and from \eqref{E:w_partials},
\[
  h|_{\{u+iv\,:\,v<0\}}\in {\sf sh}(\{u+iv: v<0\}) \quad\text{and}
  \quad h|_{\{u+iv\,:\,v>0\}}\in {\sf sh}(\{u+iv: v>0\}).
\]
Thus, if we fix $u_0+iv_0\in \C$ such that $v_0\neq 0$, then the averages of $h$ on  circles with centre
$u_0+iv_0$ and radii in $(0, |v_0|)$ dominate $h(u_0+iv_0)$.
Now observe that
\begin{align*}
  \frac1{2\pi}\int_0^{2\pi}h(u_0+re^{i\tht})\,d\tht
  = \frac1{2\pi}\int_0^{2\pi}\big(2(u_0+r\cos\tht)^2 + (r\sin\tht)^4\big)\,d\tht
  &\geq 2u_0^2 \notag \\
  &= h(u_0) 
\end{align*}
for each $u_0\in \R$ and $r>0$.
From the last two observations, we conclude that $h\in {\sf sh}(\C)$.
From the above discussion, we establish three of the conditions for $D$ to be a strongly hyperconvex Lipschitz
domain, the third being a consequence of our last assertion (about $h$) and the definition of $\rho_1$: 
\begin{itemize}
  \item $\rho_1$ is (uniformly) Lipschitz on any relatively compact domain in $\C^2$.
  \item $\bdy D=\rho_1^{-1}\{0\}$.
  \item $\rho_1\in {\sf psh}(\C^2)$.
\end{itemize}
Moreover, as $(z_1,z_2)\in D$ implies that $\im(z_2)> 0$, \eqref{E:w_partials} implies, after an
easy calculation, that:
\begin{itemize}
  \item For every $(z_1, z_2)\in D$,
  \[
    dd^c{\rho_1}(z_1,z_2) = (i/2)\left(dz_1\wedge d\zbar_1 + \big(1/2 +
    3(\im(z_2))^2\big)dz_2\wedge d\zbar_2\right)
    \geq (1/2)\omega_{2}(z_1, z_2).
  \]
\end{itemize}
It remains to show that $D$ is connected. To this end, observe that, by the definition of
$D$, it suffices to show that
\[
  \Delta := \{(u, v)\in \R^2 : h(u,v)<0\}
\]
is connected. By the definition of $h$, if $(u,v)\in \bdy\Delta$, then $v\geq 0$. Thus, by \eqref{E:sum_of_sq},
$(u,v)\in \bdy\Delta$ implies that
\begin{align*}
  &\big(\sqrt{2}u, v^2-(1/2)\big) \text{ lies on the circle with centre $0$ and radius $1/2$} \\
  \iff &\big(2\sqrt{2}u, 2(v^2-(1/2))\big) \text{ lies on the circle with centre $0$ and radius $1$}.
\end{align*}
Thus, we have the continuous, bijective map $\psi: \mathbb{S}^1\lrarw \bdy\Delta$:
\[
  \psi(s,t) := \left(\frac{s}{2\sqrt{2}},\,\sqrt{\frac{t+1}2}\right), \quad (s,t)\in \mathbb{S}^1.
\]
As $\mathbb{S}^1$ is compact, $\psi$ is a homeomorphism; thus
$\bdy\Delta$ is a Jordan curve. It follows from the Jordan Curve Theorem that $\Delta$ is
connected. By our remarks preceding the definition of $\Delta$, we conclude that $D$ is a domain.
Together with the four bullet-points earlier in this paragraph, it follows that $D$ is a
strongly hyperconvex Lipschitz domain.
\smallskip

It is elementry to see that the only point
$p\in\mathbb{C}^2$ such that $\nabla\rho_1(p)=0$ and $p\in\bdy D$ is $p=(0,0)$. Therefore, $D$ has a non-smooth
boundary. Since $\bdy D$ is $\smoo^\infty$-smooth 
in a small neighbourhood of $\xi$ for each $\xi\in\bdy D\setminus\{(0,0)\}$, we only need to examine $\bdy D$ around
$(0,0)$.
Recall that if $(z_1,z_2)\in\overline{D}$, for $z_2=u+iv$, then $v\geq 0$. Thus, we will solve the equation
\begin{align*}
  |z_1|^2+2u^2-v^2+v^4&=0,\quad\text{and}\\
  v&\geq 0,
\end{align*}
explicitly for $v$ in terms of $z_1$ and $u$ for $\|(z_1,u)\|$ small. Solving the auxiliary quadratic
equation
\[
  X^2-X+(2u^2+|z_1|^2)=0,
\]
we find that there exists a $\delta>0$ sufficiently small that, if $\Gamma_\delta$ denotes the connected 
component of 
$\bdy D\cap B^3(0,\delta)\times\R$ containing $(0,0)$, then
\[
  \Gamma_\delta: v=\sqrt{|z_1|^2+2u^2}+O(\|(z_1,u)\|^2)\quad\text{$\forall (z_1,u)\in B^3(0,\delta)$}.
\]
By our previous remarks about the points in $\bdy D\setminus\{(0,0)\}$, $\bdy D$ is a Lipschitz manifold. \emph{We
have shown that $D$ satisfies all the conditions of Theorem~\ref{T:proper-map}}.
\smallskip

Since $D$ is connected and $\phi$ is continuous, $\Omega$ is connected. By definition,
$(w_1,w_2,w_3)\in\overline\Omega$ implies that $\im(w_2)\nless 0$. Hence, 
$\Omega\subseteq\{(w_1,w_2,w_3)\in\mathbb{C}^3:|w_1^2-1|^2+2(\re (w_2))^2+((\im(w_2))^2-
1/2)^2+|w_3|^2<1/4\}$. Therefore, $\Omega$ is a bounded domain in $\C^3$. We have shown that
$h\in {\sf sh}(\C)$; by definition
it follows that $\rho_2\in {\sf psh}(\C^3)$.
Hence, from the above discussion, and a few easy estimates, we have: 
\begin{itemize}
  \item $\rho_2$ is (uniformly) Lipschitz on any relatively compact domain in $\C^3$.
  \item For every $(w_1,w_2,w_3)\in \Omega$,
  \begin{align}
    dd^c&\rho_2(w_1,w_2,w_3) \notag \\
    &= (i/2)\left(4|w_1|^2dw_1\wedge d\overline{w}_1+\big(1/2+3(\im(w_2))^2\big)dw_2\wedge d\overline{w}_2+dw_3\wedge d\overline{w}_3\right)\notag\\
  &\geq (1/2) \omega_{3}(w_1, w_2, w_3).\label{E:ddcrho2}
  \end{align}
  \end{itemize}
Therefore, since $\Omega$ is bounded, $dd^c\rho_2\vert_\Omega$ is a $(1,1)$-current with $\leb{\infty}(\Omega)$-coefficients. 
Now, define 
\[
  U:= \text{the connected component of 
  $\{(w_1,w_2,w_3)\in\C^3:\rho_2(w_1,w_2,w_3)<1/2\}$ containing $\overline{\Omega}$.}
\]
Note that if $(w_1, w_2, w_3)\in \rho_{2}^{-1}\{0\}$, then $(-w_1, w_2, w_3)\in 
\rho_{2}^{-1}\{0\}$. However, we claim that $\left(\rho_2|_{U}\right)^{-1}\{0\} = \bdy\Omega$.
To see this: note that by \eqref{E:h(z)}, $\rho_2(w_1, w_2, w_3)<1/2$
implies that $|w_1|^2\geq 1-|w_1^2-1|\geq(1-\sqrt{3}/2)>0$. Therefore, 
\begin{equation}\label{E:u-disjoint-0}
  \{(0,w_2,w_3):(w_2,w_3)\in\C^2\}\cap\overline{U}=\emptyset.
\end{equation}
From \eqref{E:u-disjoint-0}, it easily follows that $\big(\rho_2^{-1}\{0\}\cap U,\,
\rho_2^{-1}\{0\}\cap (\C^3\setminus U)\big)$ is a separation of $\rho_{2}^{-1}\{0\}$. Therefore:
\begin{itemize}
  \item $\left(\rho_2|_{U}\right)^{-1}\{0\} = \bdy\Omega$.
\end{itemize}
From the three bullet-points in this paragraph, we conclude that 
$\Omega$ is a regular strongly hyperconvex Lipschitz domain. 
\smallskip

Again, it is elementry to see that the only point 
$q\in\C^3$ such that $\nabla\rho_2(q)=0$ and $q\in\bdy\Omega$ is $q=(1,0,0)$. Therefore, $\Omega$ has a non-smooth
boundary and in a manner similar to our previous argument for $D$, we can conclude that $\bdy\Omega$ is a Lipschitz
manifold. \emph{We have now shown that $\Omega$ satisfies all the conditions of Theorem~\ref{T:proper-map}}.
\smallskip

Finally, let us define
\[
  F(z_1,z_2):=\Big(\sqrt{z_1+1},z_2,0\Big)\quad\text{$\forall (z_1,z_2)\in D$}.
\]
Since $\phi$ is holomorphic, $F:D\lrarw\Omega$ is holomorphic. Now, to prove that $F:D\lrarw\Omega$ is proper, it
suffices to show that $\phi=(\phi_1,\phi_2):D\lrarw\phi(D)$ is proper. To this end, consider
$\big((z_{1,\,\nu}, z_{2,\,\nu})\big)_{\nu\geq 1}\subset D$ such that 
$(z_{1,\,\nu}, z_{2,\,\nu})\lrarw\xi$ for some $\xi\in\bdy D$.
This implies that 
\begin{align*}
  |z_{1,\,\nu}|^2+h(z_{2,\,\nu})&\lrarw 0\\
  \implies |(\phi_1(z_{1,\,\nu}, z_{2,\,\nu}))^2-1|^2+h(\phi_2(z_{1,\,\nu},z_{2,\,\nu}))&\lrarw 0, 
\end{align*}
as $\nu\to\infty$. Therefore, since $\phi(D)\subseteq\{(w_1,w_2)\in\C^2:|w^2_1-1|^2+h(w_2)<0\}$, it
follows that $\big(\phi(z_{1,\,\nu}, z_{2,\,\nu})\big)_{\nu\geq 1}$ exits every compact in $\phi(D)$. Hence,
$\phi:D\lrarw\phi(D)$ is proper. Let $F=(F_1,F_2,F_3)$. Since $(z_1,z_2)\in D$ implies that $|z_1|<1/2,$ it follows that
$\frac{\bdy F_1}{\bdy z_1}=\frac1{2\sqrt{z_1+1}}\in\leb{\infty}(D)$. Therefore, clearly
\[
  \pd{F_{\mu}}{z_{j}}{}\,\overline{\pd{F_{\nu}}{z_{k}}{}} \in \leb{\infty}(D)
\]
for each $j, k: 1\leq j, k\leq 2$
and each $\mu, \nu: 1\leq \mu, \nu\leq 3$. Hence, $D$, $\Omega$, and $F$ satisfy the conditions of
Theorem~\ref{T:proper-map} such that $D$ and $\Omega$ have non-smooth boundaries. \hfill$\blacktriangleleft$
\medskip

\section{Analytical preliminaries}\label{S:ana}
This section is devoted to several important observations and results related, primarily, to the proofs of
the theorems in Section~\ref{SS:extension}. 

\subsection{Concerning the complex Monge--Amp{\`e}re equation}\label{SS:M-A_prelims}
We begin with a brief discussion on $B$-regular domains, introduced in Section~\ref{S:intro} and whose
definition was deferred to this section.

\begin{definition}
Let $\Omega$ be a bounded domain in $\Cn$. We say that $\Omega$ is $B$-regular if $\bdy\Omega$ is a $B$-regular
set: i.e., $\bdy\Omega$ has the property that each function $\varphi\in \smoo(\bdy\Omega; \R)$ is the uniform
limit on $\bdy\Omega$ of a sequence $(u_{\nu})_{\nu\geq 1}$ of continuous plurisubharmonic functions defined
on open neighbourhoods of $\bdy\Omega$ (each such neighbourhood depending on the function $u_{\nu}$).
\end{definition}

As mentioned in Section~\ref{S:intro}, \cite[Theorem~4.1]{blocki:cmaohd96} by B{\l}ocki establishes that
the Dirichlet problem \eqref{E:monge-ampere} admits a unique solution for any non-negative $f\in
\smoo(\overline\Omega; \R)$ and any $\varphi\in \smoo(\bdy\Omega; \R)$. Note that when
$u|_{\Omega}\notin \smoo^2(\Omega; \R)$, the left-hand side of the equation in \eqref{E:monge-ampere} is
interpreted as a current of bidegree $(n, n)$. That this makes sense when $u\in {\rm psh}(\Omega)\cap
\smoo(\overline\Omega)$ was established by Bedford--Taylor \cite{bedfordtaylor:dpcmae76}, who established an
existence and uniqueness theorem for \eqref{E:monge-ampere} with the above-mentioned data for strongly
pseudoconvex domains. Furthermore, \cite[Theorem~4.1]{blocki:cmaohd96} is a consequence of Theorem~8.3
in \cite{bedfordtaylor:dpcmae76}.
\smallskip

As hinted at in our discussion in Section~\ref{SS:extension}, for proving Theorem~\ref{T:proper-map}:
\begin{itemize}
  \item We need an existence theorem for the Dirichlet problem \eqref{E:monge-ampere} in which the datum
  $f$ is far less well-behaved than what is mentioned right after \eqref{E:monge-ampere} (or in the previous
  paragraph).
  \item With the data just mentioned, we also require some information on the modulus of continuity of
  the solutions to \eqref{E:monge-ampere}
\end{itemize}
The earliest results to provide information of the above-mentioned type are presented in
\cite{guedjkolodziejzeriahi:hcsmae08}. The result (which relies strongly on the latter work) that we need
for our proofs is the following. 

\begin{result}[paraphrasing {\cite[Theorem~2]{charabati:hrscmae15}} by Charabati]
\label{R:charabati}
Let $\Omega\varsubsetneq \Cn$ be a strongly hyperconvex Lipschitz domain. Let $f\in
\leb{p}(D, \Leb_{2n})$ for some $p>1$ and let $\varphi: \bdy\Omega\lrarw \R$ be the restriction of
a function defined on a neighbourhood of $\bdy\Omega$ and of class $\smoo^{1,1}$. Then, the Dirichlet
problem \eqref{E:monge-ampere} has a unique solution $u$, and $u$ belongs to
$\smoo^{0,s}(\overline{\Omega})$ for each $s\in (0, 1/(nq+1))$, where $1/p+1/q=1$.
\end{result}

In the above result, in saying that a function defined on an open set $\mathscr{O}\subseteq \Cn$ is of class
$\smoo^{1,1}$ we mean that it is of class $\smoo^1$ and all it first-order partial derivatives satisfy a
Lipschitz condition.

\subsection{Miscellaneous analytical results}
We shall now present several supporting results that we shall need in our proofs. We begin with a technical result.
In the proof that follows, if $\mathscr{O}$ is an open set in $\C^m$, then $\distr(\mathscr{O}; \C)$ will denote
the space of all $\C$-valued distributions on $\mathscr{O}$.

\begin{proposition}\label{P:L^r}
Let $D$ be a bounded domain in $\C^m$, $\Omega$ a bounded domain in $\Cn$, $m<n$, and let
$F: D\lrarw \Omega$ be a proper holomorphic map. Let $\rho\in \smoo(\Omega)\cap {\sf psh}(\Omega)$
and assume $\rho$ is such that, writing
\[
  dd^c\rho = (i/2)\sum\nolimits_{\mu,\nu=1}^n b_{\mu\overline{\nu}}\,dw_{\mu}\wedge d\wbar_{\nu},
\]
$b_{\mu\overline{\nu}} \in \leb{\infty}(\Omega)$ for each $\mu,\nu: 1\leq \mu,\nu\leq n$. Then:
\begin{itemize}[leftmargin=25pt]
  \item[$(a)$] In the sense of currents
  \[
    dd^c(\rho\circ F) = (i/2)\sum\nolimits_{j,k=1}^m
    \left(\sum\nolimits_{\mu,\nu=1}^n (b_{\mu\overline{\nu}}\circ F)\,
    \pd{F_{\mu}}{z_j}{}\,\overline{\pd{F_{\nu}}{z_k}{}}\,\right)dz_j\wedge d\zbar_{k}.
  \]
  \item[$(b)$] Suppose $F$ satisfies the condition in Theorem~\ref{T:proper-map}.
  If we write $(dd^c(\rho\circ F))^m = \widetilde{f}d\beta_m$, then $\widetilde{f}$ is a
  function that is non-negative a.e. and is of class $\leb{p/m}(D, \Leb_{2m})$, $p$ as in Theorem~\ref{T:proper-map}.
\end{itemize}
\end{proposition}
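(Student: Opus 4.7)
My approach is to mollify $\rho$, reduce both parts to the classical smooth case, and then pass to the limit. Fix any $\Omega'\Subset\Omega$ and a standard mollifier $\chi_\epsilon$; for $\epsilon$ small, $\rho_\epsilon:=\rho*\chi_\epsilon$ is smooth plurisubharmonic on $\Omega'$, converges locally uniformly to $\rho$, and has complex-Hessian coefficients $b_{\mu\overline\nu}^{(\epsilon)}:=b_{\mu\overline\nu}*\chi_\epsilon$ satisfying $\|b_{\mu\overline\nu}^{(\epsilon)}\|_{\leb{\infty}(\Omega')}\leq\|b_{\mu\overline\nu}\|_{\leb{\infty}(\Omega)}$. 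Since $F$ is proper, $F^{-1}(\Omega')\Subset D$, so $\rho_\epsilon\circ F\to\rho\circ F$ locally uniformly on $D$ and therefore $dd^c(\rho_\epsilon\circ F)\to dd^c(\rho\circ F)$ in the sense of currents.

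For part $(a)$, the classical chain rule applied to the smooth $\rho_\epsilon$ and the holomorphic $F$ gives
\[
  dd^c(\rho_\epsilon\circ F)=(i/2)\sum_{j,k=1}^{m}\Big(\sum_{\mu,\nu=1}^{n}(b_{\mu\overline\nu}^{(\epsilon)}\circ F)\,\pd{F_\mu}{z_j}{}\,\overline{\pd{F_\nu}{z_k}{}}\Big)\,dz_j\wedge d\zbar_k.
\]
The compositions $b_{\mu\overline\nu}^{(\epsilon)}\circ F$ are uniformly bounded in $\leb{\infty}(F^{-1}(\Omega'))$, so by Banach--Alaoglu a subsequence converges weak-$*$ in $\leb{\infty}$; this limit is what I shall designate as ``$b_{\mu\overline\nu}\circ F$''. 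Because multiplication by the fixed factors $\pd{F_\mu}{z_j}{}\,\overline{\pd{F_\nu}{z_k}{}}\in\leb{p}_{\rm loc}(D)$ is continuous from weak-$*$ $\leb{\infty}$ into weak $\leb{p}$, taking the limit termwise in the display yields the identity claimed in $(a)$.

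For part $(b)$, $\rho\circ F$ is continuous plurisubharmonic on $D$, so Bedford--Taylor theory makes $(dd^c(\rho\circ F))^m$ a well-defined nonnegative Radon measure; hence $\widetilde{f}\geq 0$ a.e. For the $\leb{p/m}$ bound, put $C_{jk}^{(\epsilon)}:=\sum_{\mu,\nu}(b_{\mu\overline\nu}^{(\epsilon)}\circ F)\pd{F_\mu}{z_j}{}\,\overline{\pd{F_\nu}{z_k}{}}$, so that the smooth formula reads $(dd^c(\rho_\epsilon\circ F))^m=m!\,\det(C_{jk}^{(\epsilon)})\,\beta_m$. The matrix $(C_{jk}^{(\epsilon)})$ is positive semi-definite Hermitian, so Hadamard's inequality gives $0\leq\det(C_{jk}^{(\epsilon)})\leq\prod_{j=1}^{m}C_{jj}^{(\epsilon)}$. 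The uniform $\leb{\infty}$-bound on $b_{\mu\overline\nu}^{(\epsilon)}$ together with hypothesis \eqref{E:F_condn} yields a uniform $\leb{p}_{\rm loc}$-bound on each $C_{jj}^{(\epsilon)}$; H\"older's inequality for the $m$-fold product then produces $\|\det(C_{jk}^{(\epsilon)})\|_{\leb{p/m}(K)}\leq C(K)$ for every $K\Subset D$. Since $p>m$ makes $\leb{p/m}$ reflexive, a subsequence converges weakly in $\leb{p/m}_{\rm loc}(D)$ to some $g$, and the Bedford--Taylor continuity theorem for Monge--Amp\`ere measures under locally uniform convergence of continuous psh functions identifies $\widetilde{f}=m!\,g\in\leb{p/m}_{\rm loc}(D)$.

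The hard part is interpreting ``$b_{\mu\overline\nu}\circ F$'' in $(a)$ meaningfully: since $F(D)$ has zero $2n$-dimensional Lebesgue measure in $\Omega$, an $\leb{\infty}$-equivalence class on $\Omega$ admits no intrinsic pointwise restriction to $F(D)$, and the symbol must be understood through a limiting procedure such as the weak-$*$ limit above. Because the identity in $(a)$ is between currents whose left-hand side $dd^c(\rho\circ F)$ is intrinsic, the formula is compatible with any such choice of representative. Beyond this point, the proof uses only standard tools: the chain rule for smooth functions, Hadamard's and H\"older's inequalities, and Bedford--Taylor continuity.
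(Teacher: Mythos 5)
Your proposal takes the same skeleton as the paper's proof---mollify $\rho$, apply the classical chain rule to $\rho_\epsilon\circ F$, pass to the limit---but the two limiting steps are handled by genuinely different mechanisms. For $(a)$, the paper extracts a subsequence along which $(b_{\mu\overline{\nu}}*\chi_{1/l_i})\circ F\to b_{\mu\overline{\nu}}\circ F$ pointwise a.e.\ on $D$ (asserting $\Leb_{2m}(F^{-1}(\mathscr{E}_{\mu\overline{\nu}}))=0$ because the critical set of $F$ is null) and then invokes Dominated Convergence. You instead extract a weak-$*$ $\leb{\infty}$ limit and pair it against the fixed $\leb{p}$ factors; as you point out, since $F(D)$ is itself $\Leb_{2n}$-null there is no canonical pointwise restriction of an $\leb{\infty}$-class to it, so your weak-$*$ route is arguably the cleaner way to make sense of the composition. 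The price is that ``$b_{\mu\overline{\nu}}\circ F$'' then denotes a possibly subsequence-dependent limit rather than a pointwise composition; the displayed identity is nonetheless unambiguous since the left side $dd^c(\rho\circ F)$ is intrinsic. For $(b)$, the paper never revisits the approximants: having established that each coefficient $a_{j\overline{k}}\in\leb{p}(D)$, it plugs directly into the Bedford--Taylor expansion $\widetilde f = C_m\sum_{\sigma}{\rm sign}(\sigma)\prod_j a_{j\overline{\sigma(j)}}$ and applies H\"older once, whereas you work at the level of the $\epsilon$-approximants via Hadamard's inequality, a uniform $\leb{p/m}$ bound, reflexivity of $\leb{p/m}$, and Bedford--Taylor convergence of the Monge--Amp\`ere measures under locally uniform convergence. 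Both routes are sound; yours buys independence from the pointwise-composition issue in $(a)$ at the cost of a second weak-compactness extraction. One point to tighten: the bounds on $C^{(\epsilon)}_{jj}$ are in fact uniform in $\leb{p}(D^\epsilon)$ because the hypothesis~\eqref{E:F_condn} is global, so extending by zero and passing to the weak limit gives $\widetilde f\in\leb{p/m}(D)$, not merely $\leb{p/m}_{\rm loc}(D)$, which is what the proposition actually asserts.
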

\begin{proof}
Let $\chi \in\smoo_{c}^\infty(\Cn)$ be a non-negative cut-off function with $\|\chi\|_1=1$ and
support in $\D^n$ that satisfies
$\chi(w) = \chi(|w_1|,\dots, |w_n|)$ for all $w\in \Cn$. Define $\chi_{\eps} := \eps^{-2n}\chi(\bcdot/\eps)$.
For $\eps>0$, write $\Omega_{\eps} := \{w\in \Omega: \delta_{\Omega}(w) < \sqrt{n}\eps\}$. Note that
$\rho*\chi_{\eps}$
is defined on $\Omega_{\eps}$. It is well-known that
$\rho*\chi_{\eps} \in \smoo^\infty(\Omega_{\eps})\cap {\sf psh}(\Omega_{\eps})$, that 
\begin{equation}\label{E:convo_dec}
  \rho*\chi_{\eps}(w) \searrow \rho(w) \; \text{as $\eps\searrow 0$} \quad \forall w\in \Omega,
\end{equation}
and, since $\rho\in \smoo(\Omega)$, this convergence is uniform on compact subsets of $\Omega$; see,
for instance, \cite[Chapter~2]{klimek:pp91}. Define $D^\eps := F^{-1}(\Omega_{\eps})$. Note that
$D^{\eps}\Subset D$, since $F$ is proper, and that $(\rho*\chi_{\eps})\circ F$ is well-defined on $D^{\eps}$.
Thus, we shall abbreviate $(\rho*\chi_{\eps})\circ (F|_{D^\eps})$ as $(\rho*\chi_{\eps})\circ F$ and understand 
that this is defined on $D^\eps$.
Moreover, if we fix $\eps_0 > 0$, then $(\rho*\chi_{\eps})\circ F\in \distr(D^{\eps_0}; \C)$ for every
$\eps\in (0, \eps_0)$. Now, if $\tst \in \mathscr{D}(D^{\eps_0}; \C)$, i.e., a test function on
$D^{\eps_0}$, then, as ${\rm supp}(\eta)$ is a compact subset of $D^{\eps_0}$, by the observation
following \eqref{E:convo_dec}, we deduce that
\[
  (\rho*\chi_{\eps})\circ F \lrarw (\rho\circ F)|_{D^{\eps_0}} \; \; 
  \text{in $\distr(D^{\eps_0}; \C)$},
\]
and the above is true for all $\eps_0>0$. This implies, since differentiation is a continuous operator
on $\distr(D^{\eps_0}; \C)$, that
\[
  \spd{z_j,\zbar_k}{2}((\rho*\chi_{\eps})\circ F) \lrarw 
  \spd{z_j,\zbar_k}{2}\big((\rho\circ F)|_{D^{\eps_0}}\big) \; \; 
  \text{in $\distr(D^{\eps_0}; \C)$},
\]
for all $j, k: 1\leq j, k\leq m$, and the above is true for all $\eps_0>0$. Finally, since
$\eps_0 > 0$ is arbitrary, we have
\begin{equation}\label{E:convo_UCT}
  \left\langle \spd{z_j,\zbar_k}{2}((\rho*\chi_{\eps})\circ F), \tst\right\rangle
  \lrarw \left\langle\spd{z_j,\zbar_k}{2}(\rho\circ F), \tst\right\rangle \; \; 
  \text{as $\eps\searrow 0$}
\end{equation}
for all $j, k: 1\leq j, k\leq m$, and for any test function $\tst\in \mathscr{D}(D; \C)$.
\smallskip

On the other hand, as $(\rho*\chi_{\eps})\circ F \in \smoo^\infty(D^{\eps})$, we have
\begin{align}
  \spd{z_j,\zbar_k}{2}((\rho*\chi_{\eps})\circ F)(z)
  &= \left\langle\rho,\,(\spd{z_j,\zbar_k}{2}\,\chi_{\eps})(F(z)-\bcdot)\right\rangle \notag \\
  &= \sum_{\mu, \nu=1}^n \left\langle \rho,\,
     \pd{\chi_{\eps}}{w_\mu\wbar_{\nu}}{2}(F(z)-\bcdot)\,
     \pd{F_{\mu}}{z_j}{}(z)\,\overline{\pd{F_{\nu}} {z_k}{}}(z)\right\rangle \notag \\
  &= \sum_{\mu, \nu=1}^n \left\langle \spd{w_{\mu},\wbar_{\nu}}{2}\rho,\,
     \chi_{\eps}(F(z)-\bcdot)\right\rangle
     \pd{F_{\mu}}{z_j}{}(z)\,\overline{\pd{F_{\nu}}{z_k}{}}(z) \notag \\
  &= \sum_{\mu, \nu=1}^n \big(b_{\mu\overline{\nu}}*\chi_{\eps}\big)(F(z))\,
     \pd{F_{\mu}}{z_j}{}(z)\,\overline{\pd{F_{\nu}}{z_k}{}}(z)  \quad \forall z\in D^{\eps}. \label{E:convo_link}
\end{align}
The second equality above is due to the chain rule while the last equality follows from the definition of
the convolution and from our hypothesis. Note that the above is true for all $\eps>0$ and for
every $j, k: 1\leq j,k\leq m$.
Since $b_{\mu\overline{\nu}} \in \leb{\infty}(\Omega)$ for each $\mu, \nu: 1\leq \mu, \nu\leq n$
and as $\Omega$, being bounded, has finite Lebesgue measure,
$b_{\mu\overline{\nu}} \in \leb{1}(\Omega, \Leb_{2n})$ for each $\mu, \nu: 1\leq \mu, \nu\leq n$. Thus,
a diagonal argument gives us a sequence $(l_i)_{i\geq 1}\subset \N$ such that
\[
  \big(b_{\mu\overline{\nu}}*\chi_{1/l_i}\big)(F(z))\lrarw b_{\mu\overline{\nu}}(F(z))
  \; \; \text{for a.e. $z\in D$ as $i\to \infty$}, 
\]
and the above holds true for each $\mu, \nu: 1\leq \mu, \nu\leq n$. Now fix a test function
$\tst\in \mathscr{D}(D; \C)$. Then, as $(\chi_{\eps} )_{\eps>0}$ is an approximation of the
identity and as ${\rm supp}(\tst)\Subset D$, there exists a number
$C(\eta, j, k)>0$ such that
\[
  \|((b_{\mu\overline{\nu}}*\chi_{1/l_i})\circ F)\,\spd{z_j}{}\!F\,\overline{\spd{z_k}{}\!F}\,\tst\|_1 \leq
  C(\eta, j, k)\|b_{\mu\overline{\nu}}\|_{\infty}\|\eta\|_{\infty} \quad \forall i\in \N.
\]
We can thus apply the Dominated Convergence Theorem, which gives
\begin{equation}\label{E:convo_DCT}
  \left\langle ((b_{\mu\overline{\nu}}*\chi_{1/l_i})\circ F)\,\spd{z_j}{}\!F\,\overline{\spd{z_k}{}\!F}, \tst\right\rangle
  \lrarw \langle (b_{\mu\overline{\nu}}\circ F)\,\spd{z_j}{}\!F\,\overline{\spd{z_k}{}\!F}, \tst\rangle \; \; \text{as $i\to \infty$},
\end{equation}
and which holds true for each $\mu, \nu: 1\leq \mu, \nu\leq n$. From \eqref{E:convo_UCT},
\eqref{E:convo_link}, and \eqref{E:convo_DCT}, part~$(a)$ follows.
\smallskip

Let us write
\[
  dd^c(\rho\circ F) = (i/2)\sum\nolimits_{j,k=1}^m a_{j\overline{k}}\,dz_j\wedge d\zbar_{k},
\]
Let $\mathscr{E}_{\mu\overline{\nu}}$ denote the set on which either $b_{\mu\overline{\nu}}$ is undefined or
$|b_{\mu\overline{\nu}}|$ equals
$+\infty$. Then, by hypothesis, $\Leb_{2n}(\mathscr{E}_{\mu\overline{\nu}})=0$ for each $\mu, \nu: 
1\leq \mu, \nu\leq n$. Since $F$ is proper, its critical set has zero Lebesgue measure. It is well-known
that, owing to the latter property,
\[
 \Leb_{2m}(F^{-1}(\mathscr{E}_{\mu\overline{\nu}})) = 0 \quad
 \forall \mu, \nu: 1\leq \mu, \nu\leq n.
\]
Thus, the set on which either $b_{\mu\overline{\nu}}\circ F$ is undefined or $|b_{\mu\overline{\nu}}\circ F|$
equals $+\infty$ has zero Lebesgue measure for each
$\mu, \nu: 1\leq \mu, \nu\leq n$. Since each $b_{\mu\overline{\nu}}$ is essentially bounded,
$b_{\mu\overline{\nu}}\circ F \in \leb{\infty}(D)$. Then, by part~$(a)$ and by our assumption on
$F$, we have
\[
a_{j\overline{k}}\in \leb{p}(D, \Leb_{2m}) \subseteq \leb{m}(D, \Leb_{2m})
\]
for each $j,k: 1\leq j,k\leq m$. By \cite[Proposition~2.7]{{bedfordtaylor:dpcmae76}}, there exists a constant
$C_m\neq 0$ such that $\widetilde{f}$ is given by
\begin{equation}\label{E:f-tilde}
  \widetilde{f} = C_m\!\!\sum_{\sigma\in S_m}{\rm sign}(\sigma)\prod_{j=1}^m a_{j\,\overline{\sigma(j)}}\,,
\end{equation}
where $S_m$ denotes the group of permutations of $m$ objects. 
By part~$(a)$, $\widetilde{f}$ is a homogeneous polynomial of degree $m$ whose indeterminates are
$\spd{z_j}{}\!F_{\mu}\,\overline{\spd{z_k}{}\!F_{\nu}}$, $1\leq j, k\leq m, 1\leq \mu, \nu\leq n$, and
whose coefficients are $\leb{\infty}$-functions. Thus, by H{\"o}lder's inequality and our assumption on $F$,
$\wt{f}\in \leb{p/m}(D, \Leb_{2m})$. Finally, since $\rho\circ F\in {\sf psh}(D)$, 
$(dd^c(\rho\circ F))^m$ is a positive $(m,m)$-current and so, in view of \eqref{E:f-tilde},
$\widetilde{f}$ is non-negative a.e.
\end{proof}

The next two results will be vital to proving Theorems~\ref{T:monge-ampere_result}
and~\ref{T:complex-geodesic}, respectively. Below, and in subsequent sections, $(\cHess{\varphi})$ will denote the
complex Hessian of $\varphi$ and $\langle\bcdot\,, \bcdot\rangle$ the standard Hermitian inner product on $\Cn$.

\begin{proposition}\label{P:monge-ampere}
Let $\Omega\varsubsetneq \Cn$ be a $B$-regular domain, and let $f\in \smoo(\overline{\Omega}; \R)$
be a non-negative function. Let $\varphi: \mathcal{U}\lrarw \R$ be a $\smoo^\infty$-smooth function, where
$\mathcal{U}$ is a neighbourhood of $\overline{\Omega}$. Assume that $\varphi\in {\sf psh}(\mathcal{U})$ and
that there exists a constant $\eps > 0$ such that
$\langle v, (\cHess{\varphi})(z) v \rangle \geq \eps\|v\|^2$
for every $z\in \Omega$ and $v\in \Cn$. Let $\omega_{f,\,\varphi}$ be a modulus of continuity of the
unique solution to the Dirichlet problem
\begin{equation}\label{E:monge-ampere_aux}
  \left.
  \begin{array}{r l}
    (dd^c{u})^n &\mkern-9mu{= f\beta_n, \; 
    \text{ $u\in \smoo(\overline{\Omega})\cap {\sf psh}(\Omega)$},} \\
    u|_{\bdy\Omega} &\mkern-9mu{= -2\varphi|_{\bdy\Omega},}
    \end{array} \right\}
\end{equation}
for the complex Monge--Amp{\`e}re equation. Then, there exists a constant $c > 0$ such that
\[
  k_{\Omega}(z; v) \geq 
  c\,\sqrt{\eps}\,\frac{\|v\|}
            {\big(\omega_{f,\,\varphi}(\delta_{\Omega}(z))\big)^{1/2}}
\]
for every $z\in \Omega$ and $v \in \Cn$.
\end{proposition}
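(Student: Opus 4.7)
The plan is to manufacture, from $u$ and $\varphi$, a negative plurisubharmonic function $\psi$ on $\Omega$ whose size near $\bdy\Omega$ is controlled by $\omega_{f,\,\varphi}(\delta_\Omega(\bcdot))$ and whose complex Hessian satisfies $\cHess\psi\geq 2\eps I$, and then to feed $\psi$ into the Sibony-type lower bound for the Kobayashi metric (Result~\ref{r:Kob_low_bd_Sibony}).

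Concretely, let $u\in\smoo(\overline\Omega)\cap{\sf psh}(\Omega)$ denote the unique solution to \eqref{E:monge-ampere_aux}, and set $\psi:=u+2\varphi$. Then $\psi\in\smoo(\overline\Omega)\cap{\sf psh}(\Omega)$ and $\psi|_{\bdy\Omega}\equiv 0$, so the maximum principle forces $\psi\leq 0$ throughout $\Omega$; plurisubharmonicity of $u$ gives $\cHess u\geq 0$ as a current, which combined with the hypothesis $\cHess\varphi\geq \eps I$ yields $\cHess\psi\geq 2\eps I$ as currents on $\Omega$. For $z\in\Omega$, choosing $\xi\in\bdy\Omega$ with $\|z-\xi\|=\delta_\Omega(z)$ and letting $L$ denote a Lipschitz constant of $\varphi$ on $\overline\Omega$, the triangle inequality yields
\[
  |\psi(z)|\,=\,|\psi(z)-\psi(\xi)|\,\leq\,\omega_{f,\,\varphi}(\delta_\Omega(z))+2L\,\delta_\Omega(z)\,\leq\,C\,\omega_{f,\,\varphi}(\delta_\Omega(z)),
\]
where the last inequality costs only an absolute constant $C$ (any modulus of continuity of the non-constant function $u$ dominates a linear function near $0$; or else one enlarges $\omega_{f,\,\varphi}$ to $\max(\omega_{f,\,\varphi}(\bcdot),\bcdot)$).

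Feeding the plurisubharmonic $\psi$ into Result~\ref{r:Kob_low_bd_Sibony} now delivers a universal constant $c_0>0$ such that
\[
  k_\Omega(z;v)^2\,\geq\,c_0\,\frac{\langle v,\cHess\psi(z)\,v\rangle}{|\psi(z)|}\,\geq\,\frac{2c_0\,\eps\,\|v\|^2}{C\,\omega_{f,\,\varphi}(\delta_\Omega(z))}\quad\forall z\in\Omega,\ v\in\Cn,
\]
whose square root is the desired estimate with $c:=\sqrt{2c_0/C}$. The main obstacle I anticipate is the regularity of $\psi$: it is merely continuous on $\overline\Omega$ and the lower bound on $\cHess\psi$ is only a current statement, so the pointwise Levi-form quantity $\langle v,\cHess\psi(z)v\rangle$ appearing in the Sibony estimate must be interpreted with care. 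I expect this to be handled either by the form of Result~\ref{r:Kob_low_bd_Sibony} tailored to continuous plurisubharmonic inputs (which is the natural setting here, since the Monge--Amp{\`e}re solutions furnished by B{\l}ocki's theorem are only continuous), or, failing that, by mollifying $\psi$ on shrunken subdomains $\{w\in\Omega:\delta_\Omega(w)>r\}$---the smoothed function $\psi_r$ retains $\cHess\psi_r\geq 2\eps I$ by non-negativity of the kernel---applying the smooth Sibony bound there, and passing to the limit $r\searrow 0$ using the uniform convergence of $\psi_r$ to $\psi$ on compacts and the continuity of the Kobayashi metric.
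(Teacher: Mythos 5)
Your construction is in essence the same as the paper's: both work with the negative plurisubharmonic function $u+2\varphi$ (the paper's auxiliary function $\auxu$ equals $\Phi_2+\varphi$ where $\Phi_2\approx u+\varphi$), both establish the boundary decay $|u+2\varphi|\lesssim\omega_{f,\varphi}(\delta_\Omega(\bcdot))$, and both feed the result into Sibony's Result~\ref{r:Kob_low_bd_Sibony}. The regularity obstacle you flag at the end is genuine, and the paper resolves it by a third route you did not list: a version of Richberg's regularisation theorem (cited as \cite[Theorem~1.1]{blocki:cmaohd96}) produces a globally defined $\smoo^\infty$ plurisubharmonic $\Phi_2$ on $\Omega$ satisfying $0\leq\Phi_2-(u+\varphi)\leq\omega_{f,\varphi}(\delta_\Omega(\bcdot))$, whereupon $\auxu:=\Phi_2+\varphi$ is smooth, negative, has $\cHess\auxu\geq\eps I$ pointwise, and decays at the same rate, so Sibony's bound applies directly with no limiting step. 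Your fallback of mollification on $\Omega_r:=\{w\in\Omega:\delta_\Omega(w)>r\}$ followed by $r\searrow 0$ does also work---$\psi_r$ is a smooth negative plurisubharmonic function on $\Omega_r$ with $\cHess\psi_r\geq 2\eps I$, Sibony applied on $\Omega_r$ bounds $k_{\Omega_r}(z;\bcdot)$ from below, and the standard monotone decrease $k_{\Omega_r}(z;v)\searrow k_\Omega(z;v)$ as $\Omega_r\nearrow\Omega$ carries the estimate to the limit---but it costs this additional convergence lemma that Richberg sidesteps. One small point you should make explicit, and which the paper also glosses over: absorbing $2L\,\delta_\Omega(z)$ into $C\,\omega_{f,\varphi}(\delta_\Omega(z))$ requires $\omega_{f,\varphi}(t)\gtrsim t$ near $0$; this is automatic for subadditive (or concave) moduli of continuity of non-constant functions, which is the right reading, but your parenthetical deserves to be promoted to a sentence in the proof.
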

\begin{proof}
Let $u_{f,\,\varphi}$ denote the unique solution to the Dirichlet problem \eqref{E:monge-ampere_aux}, which is
guaranteed by the fact that $\Omega$ is $B$-regular. Define
\[
  {\Phi}_1(z)\,:=\,u_{f,\,\varphi}(z)+\varphi(z) \quad \forall z\in \overline\Omega.
\]
By a version of Richberg's regularisation theorem (see \cite[Theorem~1.1]{blocki:cmaohd96}, for instance), there
exists a $\smoo^\infty$-smooth plurisubharmonic function $\Phi_2$ on $\Omega$ such that
\begin{equation} \label{E:richberg}
  0\leq \Phi_2(z)- \Phi_1(z)\leq \omega_{f,\,\varphi}(\delta_{\Omega}(z)) \quad
  \forall z\in \Omega.
\end{equation}
Clearly, $\Phi_2$ extends continuously to $\overline\Omega$ (we shall refer to this extension as $\Phi_2$ as
well) and
\begin{equation} \label{E:Phi_2_bdy}
  \Phi_2(z) = -\varphi(z) \quad \forall z\in \bdy\Omega.
\end{equation}
Write $\auxu(z) := \Phi_2(z)+\varphi(z)$ for each $z\in \overline\Omega$.
Since $\Phi_2$ is plurisubharmonic on $\Omega$, by our condition on $\cHess{\varphi}$,
\begin{equation} \label{E:U_Levi_pos}
  \langle v, (\cHess{\auxu})(z) v \rangle \geq \eps\|v\|^2 \quad \forall z\in \Omega
  \text{ and } \forall v\in \Cn.    
\end{equation}

Fix $z\in \Omega$. As $\bdy\Omega$ is compact, there exists a point $\xi_z\in \bdy\Omega$ such that
$\delta_{\Omega}(z) = \|z - \xi_z\|$. It follows from \eqref{E:richberg} that
\begin{align}
  |\auxu(z)|\,&\leq\,|\Phi_2(z) - \Phi_1(z)| + |\Phi_1(z) + \varphi(z)| \notag \\
  &\leq\,\omega_{f,\,\varphi}(\delta_{\Omega}(z)) +
  |(\Phi_1(z) + \varphi(z))- (\Phi_1(\xi_z) + \varphi(\xi_z))|. 
  \label{E:midway}
\end{align}
Since $\varphi$ is $\smoo^\infty$-smooth on $\mathcal{U}\supseteq \overline{\Omega}$, hence Lipschitz on
$\overline{\Omega}$, there exists a constant $C_1 >0 $ such that
\[
  |(\Phi_1(z) + \varphi(z))- (\Phi_1(\xi_z) + \varphi(\xi_z))|
  \leq C_1\omega_{f,\,\varphi}(\delta_{\Omega}(z)).
\]
Here, we have used the fact that $\|z-\xi_z\| = \delta_{\Omega}(z)$ and that $\omega_{f,\,\varphi}$ is a modulus of
continuity of $\auxu$. Combining the last estimate with
\eqref{E:midway}, we get
\[
  |\auxu(z)| \leq (1+C_1)\,\omega_{f,\,\varphi}(\delta_{\Omega}(z)).
\]
The above holds true for each $z\in \Omega$ as $z$ was chosen arbitrarily and as the choice of $C_1$ depends only
on $\omega_{f,\,\varphi}$.
\smallskip

By \eqref{E:Phi_2_bdy}, we have $\auxu|_{\bdy\Omega} = 0$. Thus, by the maximum principle, $\auxu$ is a smooth negative
plurisubharmonic function on $\Omega$. Thus, from the last inequality, \eqref{E:U_Levi_pos}, and
Result~\ref{r:Kob_low_bd_Sibony}, we conclude that
\[
  k_{\Omega}(z;v) \geq \left( \frac{1}{(1+C_1)\alpha} \right)^{1/2}\sqrt{\eps}\,\frac{\|v\|}
  {\big(\omega_{f,\,\varphi} (\delta_{\Omega}(z))\big)^{1/2}} \quad \forall z\in \Omega \text{ and } \forall v \in \Cn,
\]
which is the desired lower bound. 
\end{proof}

\begin{result}[Bharali, {\cite[Proposition~2.1]{bharali:cgtbrhltl16}}]\label{r:bharali}
\label{r:hardy-littlewood-type}
Let $\varphi:[0,r_0)\lrarw[0,+\infty]$ be a function of class of $\leb{1}\left([0,r_0), \Leb_1\right)$ for some $r_0\in(0,1)$. Let $g\in\hol(\D)$ and assume that 
\[
  |g'(re^{i\theta})|\leq\varphi(1-r) \quad\forall r: 1-r_0<r<1 \; \;\text{and} \; \;\forall\theta\in\R.
\]
Then, $g$ extends continuously to $\bdy\D$.
\end{result}
\smallskip

\section{Geometric preliminaries}\label{S:geom}
In this section, we present a few definitions and a key result that plays a vital role in the proof of
Theorem~\ref{T:complex-geodesic}. 

\subsection{Definitions on the geometry of domains} We begin with a definition that was deferred to a later
section in our discussion in Section~\ref{SS:extension}.

\begin{definition}\label{D:lipschitz}
The boundary of a domain $\Omega\varsubsetneq \Cn $ is called a \emph{Lipschitz manifold} if, for each
$p\in \bdy{\Omega}$, there exists a
neighbourhood $\Nb_p$ of $p$, a unitary map $\unitary_p$, and a $\R^{2n-1}$-open neighbourhood of the origin
that is the domain of a Lipschitz function $\varphi_{p}$ such that, denoting the affine map
$z \longmapsto \unitary_p(z-p)$ by $\unitary^p$ and writing 
$\varz = (\varz_1,\dots, \varz_n):= \unitary^p(z)$, we have
\begin{align*}
  \unitary^p(\Nb_p\cap \Omega) &= \{(\varz', \varz_n)\in \unitary^p(\Nb_p):
    \im(\varz_n) > \varphi_p(\varz', \re(\varz_n)), \ (\varz', \re(\varz_n))\in {\sf dom}(\varphi_p)\},
    \; \text{and} \\
    \unitary^p(\Nb_p\cap \bdy{\Omega}) &= \{(\varz', \varz_n)\in \C^{n-1} \times \C:
    \im(\varz_n) = \varphi_p(\varz', \re(\varz_n)), \ (\varz', \re(\varz_n))\in {\sf dom}(\varphi_p)\}.
\end{align*}
\end{definition}

The above is a familiar class of domains in real analysis, with the difference that the
domains encountered classically are domains in $\R^N$. Thus, for domains in $\Cn$,
the unitary changes of coordinates mentioned
in Definition~\ref{D:lipschitz} replace the orthogonal changes of coordinates in the classical
definition. Such domains have many useful properties and have been studied extensively (see, e.g.,
\cite{adams:Ss75} and the references therein).
\smallskip

An \emph{open right circular cone with aperture $\theta$} is an open subset of $\Cn$ of the form
\[
 \{z\in \Cn : \re[\,\langle z, v\rangle\,] > \cos(\theta/2)\|z\|\}
 =: \Gamma(\theta, v),
\]
where $v$ is some unit vector in $\Cn$, $\theta\in (0, \pi)$ (and $\langle\bcdot\,,\,\bcdot\rangle$ is as introduced
in Section~\ref{S:ana}).
\smallskip

Now, we present another definition that is needed in proving Theorem~\ref{T:proper-map}.

\begin{definition}\label{D:unif_cone_condn}
Let $\Omega\varsubsetneq \Cn$ be a domain. We say that $\Omega$ satisfies an \emph{interior-cone
condition with aperture $\theta$} if there exist constants  $r_0 > 0$, $\theta\in (0, \pi)$,
and a compact subset $K\subset \Omega$ such that, for each $z\in \Omega\setminus K$, there exist a point 
$\xi_z\in \bdy{\Omega}$ and a unit vector $v_z$ such that
\begin{itemize}
 \item $z$ lies on the axis of the cone $\xi_z+\Gamma(\theta, v_z)$, and
 \item $(\xi_z+\Gamma(\theta, v_z))\cap \mathbb{B}^n(\xi_z, r_0) \subset \Omega$.
\end{itemize}
We say that $\Omega$ satisfies a \emph{uniform interior-cone condition} if there exists a $\theta\in
(0, \pi)$ such that $\Omega$ satisfies an interior-cone condition with aperture $\theta$.
\end{definition}

The property defined above is a part of the hypothesis of the next result, which is a generalisation of the
classical Hopf Lemma for plurisubharmonic functions.

\begin{result}[Mercer, {\cite[Proposition~1.4]{mercer:gHlphmcdCd93}}] \label{R:hopf-lemma}
Let $\Omega$ be a bounded domain in $\Cn$ that satisfies an interior-cone condition with aperture
$\theta$, $\theta\in (0, \pi)$. Let $\rho: \Omega\lrarw [-\infty, 0)$ be a plurisubharmonic function.
Then, there exists a constant $c>0$ such that
\[
  \rho(z)\leq -c(\delta_{\Omega}(z))^{\alpha}  
\]
$($where $\alpha = \pi/\theta)$ for every $z\in \Omega$.
\end{result}
\smallskip

\subsection{Essential propositions} We begin this section with a result that, through its role in the proof of
Proposition~\ref{P:monge-ampere}, is at the core of several of our proofs.

\begin{result}[paraphrasing {\cite[Proposition~6]{sibony:1981}}]
\label{r:Kob_low_bd_Sibony}
Let $\Omega \subset \Cn$ be a domain. There exists a constant $\alpha > 0$ $($which is a universal
constant$)$ such that if $z\in \Omega$ and if there exists a negative plurisubharmonic function $u$ on $\Omega$ that
is of class $\mathcal{C}^2$ in a neighbourhood of $z$ and satisfies 
\[
  \langle v, (\cHess{u})(z) v \rangle \geq c \|v\|^2 \quad
  \forall v \in \Cn,
\]
for some $c > 0$, then 
\[
  k_{\Omega}(z;v) \geq \Big( \frac{c}{\alpha} \Big)^{1/2}\frac{\|v\|}{|u(z)|^{1/2}}
  \quad \forall v \in \Cn.
\]
\end{result}

Our next result is crucial to the proof of Theorem~\ref{T:complex-geodesic}. A few remarks are in order here. 
From \cite[Theorem~1.7]{blocki:cmaohd96} (which generalises a result by Sibony in \cite{sibony:cdp87} to domains 
with non-$\smoo^1$ boundaries), a domain $\Omega\Subset\Cn$ is $B$-regular if and only if $\Omega$ admits a 
plurisubharmonic peak function at each $p\in\bdy\Omega$: i.e., a function $u_p:
\overline\Omega\lrarw(-\infty,0]$ 
belonging to $\smoo(\overline\Omega)\cap{\sf psh}(\Omega)$ satisfying 
\[
  u_p(z)<0\;\;\forall z\in\overline\Omega\setminus\{p\}\quad\text{and}\quad u_p(p)=0.
\]
Since a bounded $\C$-strictly convex domain is convex, its $B$-regularity may seem obvious. But recall that
\begin{itemize}
    \item a convex domain $\Omega$ merely admits a holomorphic \textbf{weak} peak function at each
    $p\in \bdy\Omega$ (weak peak functions are not germane to our discussion, so we shall
    skip the definition).
    \item the polydisc $\D^n$, although convex, is not $B$-regular when $n\geq 2$. 
\end{itemize}
We will prove Proposition~\ref{P:C-strict_convex}
by showing that bounded $\C$-strictly convex domains admit a holomorphic peak function at
each boundary point.
Since this proposition is vital to the proof of Theorem~\ref{T:complex-geodesic}, we provide a proof of it here. 
(A word of caution: $\C$-strictly convex domains are not to be confused with $\C$-convex domains.)

\begin{proposition}\label{P:C-strict_convex}
Bounded $\C$-strictly convex domains are $B$-regular.    
\end{proposition}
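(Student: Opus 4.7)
The plan is to appeal to the characterisation recorded in the remarks preceding the proposition, namely \cite[Theorem~1.7]{blocki:cmaohd96}: a bounded domain $\Omega$ is $B$-regular if and only if it admits a continuous plurisubharmonic peak function at every boundary point. I would construct, for each $p\in\bdy\Omega$, a \emph{holomorphic} peak function $h_p$ on a neighbourhood of $\overline\Omega$; taking $|h_p|-1$ (equivalently $\log|h_p|$) then produces the desired plurisubharmonic peak function.

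Fix $p\in\bdy\Omega$. The $\C$-strict convexity hypothesis supplies a real support hyperplane $\mathcal{H}_p$ at $p$ whose complexification $\widetilde{\mathcal{H}}_p$ meets $\overline\Omega$ only at $p$. Writing $\mathcal{H}_p=\{z\in\Cn:\re L_p(z-p)=0\}$ for a non-zero $\C$-linear functional $L_p$, a direct computation using $i(\ker\re L_p)=\ker\im L_p$ gives
\[
  \widetilde{\mathcal{H}}_p \;=\; \{z\in\Cn : L_p(z-p)=0\}.
\]
After a sign normalisation of $L_p$, the support condition reads $\re L_p(z-p)\leq 0$ for all $z\in\overline\Omega$, and $\C$-strict convexity translates to the implication: if $L_p(z-p)=0$ and $z\in\overline\Omega$, then $z=p$.

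With this data in hand I would set
\[
  h_p(z) \;:=\; \frac{1}{1-L_p(z-p)}.
\]
On $\overline\Omega$, the denominator has real part $\geq 1$, so $h_p$ is holomorphic on a neighbourhood of $\overline\Omega$. The estimate
\[
  |1-L_p(z-p)|^2 \;=\; \bigl(1-\re L_p(z-p)\bigr)^{2}+\bigl(\im L_p(z-p)\bigr)^{2}\;\geq\;1
\]
yields $|h_p(z)|\leq 1$ on $\overline\Omega$, with equality iff $L_p(z-p)=0$, which by the previous step forces $z=p$. Hence $h_p(p)=1$ and $|h_p|<1$ on $\overline\Omega\setminus\{p\}$, so $u_p:=|h_p|-1$ is a continuous plurisubharmonic peak function at $p$, and Bl\"ocki's criterion finishes the proof.

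There is no real obstacle here; the only conceptual subtlety\,---\,and the reason the hypothesis is indispensable\,---\,is that a real support hyperplane alone need not be disjoint from $\overline\Omega\setminus\{p\}$ (it may contain an entire flat face), so the naive pluriharmonic candidate $\re L_p(\cdot-p)$ fails to be strictly negative off $p$. Passing to the Cayley-type reciprocal $1/(1-L_p(\cdot-p))$ converts the strict inequality available on the complex hyperplane (i.e.\ away from $\widetilde{\mathcal{H}}_p$, which by $\C$-strict convexity means away from $p$) into the strict peaking $|h_p|<1$.
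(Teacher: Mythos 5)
Your proof is correct and takes a genuinely different, more elementary route than the paper's. Both start from the same criterion (B{\l}ocki's characterisation of $B$-regularity via continuous plurisubharmonic peak functions), and both work by producing a \emph{holomorphic} peak function at each $p\in\bdy\Omega$. The divergence lies in how that holomorphic peak function is built. The paper projects $\Omega$ orthogonally onto the complex line through $p$ in the direction normal to $\widetilde{\mathcal{H}}_p$, observes that the image is a bounded convex planar domain $\omega$ with $0\in\bdy\omega$, and then imports a peak function for $\D$ at $1$ via the Riemann mapping theorem and Carath{\'e}odory's extension theorem. You instead write down the peak function explicitly: identifying $\widetilde{\mathcal{H}}_p = \{L_p(\bcdot - p)=0\}$ (your computation $i\ker(\re L_p)=\ker(\im L_p)$ is correct), normalising so that $\re L_p(\bcdot-p)\leq 0$ on $\overline\Omega$, and setting $h_p = 1/(1-L_p(\bcdot-p))$. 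The estimate $|1-L_p(z-p)|^2 = (1-\re L_p(z-p))^2 + (\im L_p(z-p))^2 \geq 1$, with equality forcing $L_p(z-p)=0$ hence $z=p$ by $\C$-strict convexity, gives the peaking behaviour directly, and $|h_p|-1$ (or equivalently $\log|h_p|$, which the paper uses) is the desired psh peak function. Your argument bypasses the Riemann mapping theorem and the projection-to-a-planar-domain step entirely, which makes it both shorter and more explicit; the paper's route is more geometric and makes visible the planar domain $\omega$ and the role of Carath{\'e}odory boundary behaviour, but for the purpose of this proposition the direct Cayley-type formula you use is cleaner.
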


\begin{proof}
Let $\Omega$ be a bounded $\C$-strictly convex domain. Let $p\in\bdy\Omega$. By definition, there exists a
support hyperplane of
$\Omega$, say ${\mathcal{H}}$, containing $p$ such that the $\C$-affine hyperplane
\[
  \widetilde {\mathcal{H}}:=p+\big((\mathcal{H}-p)\cap i(\mathcal{H}-p)\big)
\]
satisfies $\widetilde {\mathcal{H}}\cap\overline\Omega=\{p\}$. Let us write $H=\widetilde {\mathcal{H}}-p$. Let
$v\in H^{\bot}$, where the orthogonal complement is with respect to the standard Hermitian inner product
$\langle\bcdot,\bcdot\rangle$, on $\Cn$, and let $\|v\|=1$. Let $L:=\{p+\zeta v: \zeta\in\C\}$. Let
${\sf proj}_L$ denote the orthogonal projection onto the $\C$-\textbf{affine} line $L$. To
clarify: as $p\in L$, ${\sf proj}_{L}(p) = p$. Next, define
\[
  \omega:=\{\zeta\in\C: p+\zeta v\in 
{\sf proj}_L(\Omega)\},
\]
which is a bounded domain in $\C$. Now define $\pi:\Cn\lrarw \C$ by 
$\pi(z):=\langle {\sf proj}_L(z)-p,v\rangle$. Let $z\in\overline\Omega$ and ${\sf proj}_L(z)=p+\zeta v$, for
some $\zeta\in\C$. Then, $\zeta\in\overline\omega$. 
Therefore, $\pi(z)=\langle{\sf proj}_L(z)-
p,v\rangle=\langle\zeta v,v\rangle=\zeta\in\overline\omega$. Hence,
$\pi(\overline\Omega)\subseteq\overline\omega$. Clearly,
$\pi\vert_\Omega:\Omega\lrarw\omega$ is holomorphic. Also, note that, $\pi(p)=0$ and
$\widetilde{\mathcal{H}}=\pi^{-1}\{0\}$.
Let $(p_j)_{j\geq 1}\subset\Omega$ be such that $p_j\to p$. Since $\pi$ is continuous, $\pi(p_j)\to\pi(p)=0$.
Now $\pi(p_j)\in\omega$ but $\pi(p)=0\notin\omega$ since $\pi^{-1}\{0\} = 
\widetilde{\mathcal{H}}$ is disjoint from $\Omega$. Therefore, $0\in\bdy\omega$. 
\smallskip

Consider two distinct points $x,y\in\omega$. Then, there exist $\widehat x,\widehat y\in\Omega$ such
that $\pi(\widehat x)=x$ and $\pi(\widehat y)=y$. Let $\mathscr{H}_{x,y}:=
\pi^{-1}(\{tx+(1-t)y:t\in[0,1]\})$, which is convex. Since $\Omega\cap\mathscr{H}_{x,y}$ is convex and
$\widehat x,\widehat y\in\Omega\cap\mathscr{H}_{x,y}$, it easily follows that 
$\{tx+(1-t)y:t\in[0,1]\}\subset\omega$. Since $x$ and $y$ were arbitrarily chosen, $\omega$ is convex.
Hence, $\omega$ is a bounded simply connected domain in $\C$ such that $\bdy\omega$ is a Jordan curve
and such that $0\in\bdy\omega$. Therefore, by the Riemann Mapping Theorem and Carath{\'e}odory's extension
theorem, there
exists a homeomorphism $\psi:\overline\omega\lrarw\overline\D$ such that
$\psi\vert_\omega:\omega\lrarw\D$ is a biholomorphism, $\psi(\bdy\omega)=\bdy\D$, and such that
$\psi(0)=1$. 
\smallskip

Let $f$ be a non-vanishing holomorphic peak function for $\D$ at $1$. Let
$g:=f\circ\psi\circ\pi|_{\overline\Omega}$. Then, $g\vert_\Omega$ is holomorphic. Let
$z\in\overline\Omega\setminus\{p\}$. Since 
$\widetilde{\mathcal{H}}=\pi^{-1}\{0\}$ and $\widetilde {\mathcal{H}}\cap\overline\Omega=\{p\}$,
$\pi(z)\in\overline\omega\setminus\{0\}$. Hence, 
$\psi(\pi(z))\neq 1$. Therefore, as $f$ is a holomorphic peak function at $1$, $|g(z)|<1$. Clearly,
$g(p)=1$. So, $g$ is a holomorphic peak function for $\Omega$ at $p$. 
Define the function $u:\overline\Omega\lrarw (-\infty,0]$ by $u(z):=\log|g(z)|$. Clearly, $u$ is
continuous, $u\vert_\Omega$ is plurisubharmonic, $u(z)<0$ for $z\in\overline\Omega\setminus\{p\}$, and 
$u(p)=0$. Thus, $u$ is a plurisubharmonic peak function for $\Omega$ at $p$. As
$p\in \bdy\Omega$ was arbitrarily chosen, by the remarks involving \cite[Theorem~1.7]{blocki:cmaohd96}
preceding this proof, the result follows.
\end{proof}
\smallskip

\section{The proof of Theorem~\ref{T:monge-ampere_result}}
\textbf{Fix} some $f\in\smoo(\overline\Omega; \R)$ such that $f$ is non-negative. Since $\Omega$ is $B$-regular,
there exists a unique plurisubharmonic solution to the equation~\eqref{E:monge-ampere} for the boundary data
$-2\varphi|_{\bdy\Omega}: \bdy\Omega\lrarw \R$, where $\varphi$ is a $\smoo^\infty$-smooth function
defined on some neighbourhood $\mathcal{U}_{\varphi}$ of $\overline{\Omega}$ such that
\begin{itemize}
  \item $\varphi\in {\sf psh}(\mathcal{U}_{\varphi})$, and
  \item $\langle v, (\cHess{\varphi})(z) v \rangle \geq \eps_{\varphi}\|v\|^2$, for some
  $\eps_{\varphi} > 0$, for every $z\in \Omega$ and $v\in \Cn$.
\end{itemize}
Let us denote the solution by $u_{\varphi}$ (for simplicity of notation, as $f$ is fixed). If possible, let 
$u_{\varphi}$ belong to the class $\smoo^{0,\alpha}(\overline\Omega)$ for some $\alpha\in(0,1]$. Write
$\omega(r):=r^\alpha$ for $r\in[0,\infty)$. Then, by assumption, there exists a constant $C_{\varphi}>0$ such that
for all $z_1,z_2\in\overline\Omega$
\[
  |u_{\varphi}(z_1)-u_{\varphi}(z_2)|\leq C_{\varphi}\,\omega(\|z_1-z_2\|).
\]
By Proposition~\ref{P:monge-ampere}, we conclude that there exists a constant $M_{\varphi}>0$ such that 
\begin{equation}\label{E:low_bound}
  k_\Omega(z;v)\geq \frac{M_{\varphi}}{\omega(\delta_\Omega(z))^{1/2}}
  =\frac{M_{\varphi}}{(\delta_\Omega(z))^{\alpha/2}}\quad 
  \text{$\forall z\in\Omega$ and $\forall v\in\Cn: \|v\|=1$}.
\end{equation}
By the contractivity of the affine embeddings 
$\D\ni\zeta\longmapsto z_{\nu}+\big(\rdsc{\Omega}(z_{\nu}; \univ_{\nu})\zeta\big)\univ_{\nu}$ for the
Kobayashi metric, we have
\[
  k_\Omega(z_{\nu};\univ_{\nu})\leq \frac1{\rdsc{\Omega}(z_{\nu}; \univ_{\nu})}\quad
  \forall \nu = 1, 2, 3,\dots\,.
\]
Combining this inequality with \eqref{E:low_bound}, it follows that
\[
  \frac{\rdsc{\Omega}(z_{\nu}; \univ_{\nu})}{\big(\delta_\Omega(z_{\nu})\big)^{\alpha/2}}\leq \frac1{M_{\varphi}}\quad 
  \forall \nu = 1, 2, 3,\dots,
\]
which contradicts \eqref{E:eta-ratio_infty}. Thus, we have produced a large class of boundary data with the
properties stated in Theorem~\ref{T:monge-ampere_result} for
which the solution to \eqref{E:monge-ampere} is not in $\smoo^{0,\alpha}(\overline{\Omega})$ for any
$\alpha\in (0,1]$. Hence the result. \hfill $\qed$
\medskip

\section{The proofs of Theorem~\ref{T:proper-map} and Theorem~\ref{T:proper-map_str-pseud}}
\label{S:proper-map_proof}
To prove the above-mentioned theorem, we will need the following lemma:

\begin{lemma}\label{L:lips-bdy-ineq}
Let $D\varsubsetneq\C^m$ be a bounded domain such that $\bdy D$ is a Lipschitz manifold. Let $p\in\bdy D$. Then, in
the notation of Definition~\ref{D:lipschitz}, but with $\psi_p$ in place of $\varphi_p$,
there exist a neighbourhood $V$ of $p$, $V\Subset\Nb_p$, and a constant
$C>1$ $($both depending on $p)$ such that
\[
  \delta_D(z)\leq\vary({\unitary}^p(z))\leq C\delta_D(z)\quad\forall z\in V\cap D,
\]
where $\vary(\varz',\varz_m):=\im(\varz_m)-\psi_p(\varz,\re(\varz_m))$ for $(\varz',\varz_m)\in
\unitary^p(V\cap D)$.
\end{lemma}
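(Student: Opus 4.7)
The setup is tailor-made for a direct computation in the Lipschitz chart. Writing $(\varz',\varz_m) = \unitary^p(z)$ for $z \in \Nb_p \cap D$, we have $\vary(\varz',\varz_m) = \im(\varz_m) - \psi_p(\varz',\re(\varz_m)) > 0$, and the obvious ``candidate'' nearest boundary point is the one obtained by dropping $\unitary^p(z)$ straight down in the $\im(\varz_m)$-direction to the graph of $\psi_p$. The plan is to use this for the lower bound and invoke the Lipschitz property of $\psi_p$ for the upper bound, leveraging crucially that $\unitary^p$, being the composition of a unitary map and a translation, is a Euclidean isometry (and so preserves $\delta_D$).

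First I would choose the neighbourhood $V$. Since $\delta_D$ is continuous on $\overline{D}$ and vanishes at $p$, and since $\overline V \subset \Nb_p$, we can shrink $V$ so that $\delta_D(z) < \tfrac{1}{2}\,{\rm dist}(\overline V,\C^m\setminus \Nb_p)$ for $z\in V$. This guarantees that any boundary point realising $\delta_D(z)$ (or even approximating it) lies in $\Nb_p$, so we are always working inside the chart. We also arrange that the orthogonal projections appearing below land in ${\sf dom}(\psi_p)$.

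For the lower bound, note that the point
\[
  \big(\varz',\,\re(\varz_m)+i\psi_p(\varz',\re(\varz_m))\big)
\]
lies on $\unitary^p(\Nb_p\cap\bdy D)$, and its Euclidean distance from $(\varz',\varz_m)$ is exactly $\vary(\varz',\varz_m)$. Pulling back by $\unitary^p$ (an isometry) yields a point of $\bdy D$ at distance $\vary(\unitary^p(z))$ from $z$, hence $\delta_D(z)\leq\vary(\unitary^p(z))$.

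For the upper bound, let $\xi_z\in\bdy D$ realise $\delta_D(z)=\|z-\xi_z\|$; by the choice of $V$, $\xi_z\in\Nb_p$, so $\unitary^p(\xi_z)=(\varw',\varw_m)$ with $\im(\varw_m)=\psi_p(\varw',\re(\varw_m))$. Then
\[
  \vary(\varz',\varz_m) = \big(\im(\varz_m)-\im(\varw_m)\big) + \big(\psi_p(\varw',\re(\varw_m))-\psi_p(\varz',\re(\varz_m))\big),
\]
and each term is controlled by $\|\unitary^p(z)-\unitary^p(\xi_z)\|=\delta_D(z)$: the first trivially, the second by the Lipschitz constant $L$ of $\psi_p$. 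This gives $\vary(\unitary^p(z))\leq(1+L)\delta_D(z)$, so one may take $C=1+L$.

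The only real care needed is the choice of $V$ so that (i) nearest-point witnesses stay inside $\Nb_p$ and (ii) the orthogonal projections onto the first $2m-1$ real coordinates of $\unitary^p(z)$ and $\unitary^p(\xi_z)$ lie in ${\sf dom}(\psi_p)$; both are handled by a straightforward compactness/continuity argument, so I do not anticipate a substantive obstacle.
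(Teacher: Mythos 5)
Your proposal is correct and takes essentially the same approach as the paper: the lower bound by projecting vertically to the graph of $\psi_p$ (which the paper dismisses as obvious), and the upper bound by choosing a nearest boundary point $\xi_z$, ensuring $\xi_z\in\Nb_p$ via a shrinking of $V$, and invoking the Lipschitz property (the paper applies the Lipschitz constant of $\vary$ directly, while you split into two terms, but this amounts to the same bound $C=1+L$).
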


\begin{proof}
The first inequality is obvious. So, we will prove the second inequality. There exists a neighbourhood $V$ of $p$,
$V\Subset\Nb_p$ such that ${\rm diam}(V)<{\rm dist}(\overline{V},\C^m\setminus\Nb_p)$. Due to the latter
inequality, for every $z\in V\cap D$, there exists $x_z\in\Nb_p\cap\bdy D$ such that 
\[
  \delta_D(z)=\delta_{\Nb_p\cap D}(z)=\|z-x_z\|.
\]
Since $\psi_p$ is Lipschitz, $\vary$ is Lipschitz; let $C$ denote its Lipschitz constant. It follows that for
every $z\in V\cap D$
\[
  \vary(\unitary^p(z))=\|\vary(\unitary^p(z))-\vary(\unitary^p(x_z))\|\leq C\|\unitary^p(z)-\unitary^p(x_z)\|=
  C\|z-x_z\|=C\delta_D(z).
\]
Hence, the result follows.
\end{proof}

We are now in a position to give a proof of Theorem~\ref{T:proper-map}.

\begin{proof}[The proof of Theorem~\ref{T:proper-map}]
Fixing $p\in\bdy D$, it suffices to show that $F$ extends continuously to a
map $\widetilde{F}_p$ on $(U_p\cap\bdy D)\cup D$, for some neighbourhood $U_p$ of $p$. This is
because, if we are able to show this, then the expression
$$
  \widetilde{F}(z) := \begin{cases}
                        F(z), &\text{if $z\in D$}, \\
                        \widetilde{F}_p(z), &\text{if $z\in U_p\cap\bdy D$},
                      \end{cases}
$$
would be well-defined. This is because if $p, q\in \bdy{D}$ are distinct points and 
$(U_p\cap U_q)\cap \bdy{D}\neq
\emptyset$, then for each $\xi\in (U_p\cap U_q)\cap \bdy{D}$, we would have
\[
  \widetilde{F}_p(\xi) = \lim_{D\ni z\to \xi}\widetilde{F}_p(z) = \lim_{D\ni z\to \xi}F(z)
   = \lim_{D\ni z\to \xi}\widetilde{F}_q(z) = \widetilde{F}_q(\xi).
\]
We will establish the above objective in the following three steps. 
\medskip

\noindent{{\textbf{Step~1.}} \emph{A preliminary estimate for $\|F'(z)v\|$ for every $z\in D$, every $v\in\C^m.$}}
\smallskip

\noindent{Since $\Omega$ is strongly hyperconvex Lipschitz, by
Result~\ref{R:charabati}, there exists a
unique plurisubharmonic solution to the Dirichlet problem
\begin{align*}
  (dd^c{u})^n &=0, \; 
  \text{ $u\in \smoo(\overline{\Omega})\cap {\sf psh}(\Omega)$}, \notag \\
  u|_{\bdy\Omega} &=-2\|w\|^2, \label{E:monge-ampere}
\end{align*}
for the complex Monge--Amp{\`e}re equation belonging to
$\smoo^{0,s}(\overline{\Omega})$ for some $s\in (0,1)$. Let us denote the solution by $\widetilde u$.
Therefore, there exists a constant $M_0>0$ such that for all 
$w_1,w_2\in\overline\Omega$
\[
  |\widetilde u(w_1)-\widetilde u(w_2)|\leq M_0\|w_1-w_2\|^s.
\]
By Proposition~\ref{P:monge-ampere}, we conclude that there exists a constant ${M}>0$ such that 
\begin{equation}\label{E:low_bound-theorem}
  k_\Omega(w;v)\geq M\frac{\|v\|}{(\delta_\Omega(w))^{s/2}}\quad\forall w\in \Omega,\quad\forall v\in\Cn.
\end{equation}
By the contractivity of the inclusion map
$\mathbb{B}^m(z,\delta_D(z))\hookrightarrow D$ for the
Kobayashi metric, we have
\begin{equation}\label{E:upper-bound-kobayashi}
 k_D(z;v)\leq\frac{\|v\|}{\delta_D(z)}\quad \forall z\in D,\quad\forall v\in\C^m.   
\end{equation}
Therefore, combining \eqref{E:low_bound-theorem} and \eqref{E:upper-bound-kobayashi}, for every $z\in D$, every
$v\in\C^m$, we have 
\begin{align}\label{E:F'(z)-upper-bound}
  \frac{{M}\|F'(z)v\|}{\big(\delta_\Omega(F(z))\big)^{s/2}}&\leq k_\Omega(F(z);F'(z)v)\leq k_D(z;v)\leq\frac{\|v\|}
  {\delta_D(z)}\notag\\\
  \implies\|F'(z)v\|&\leq \frac{1}{{M}}\frac{\|v\|}{\delta_D(z)}\big(\delta_\Omega(F(z))\big)^{s/2}. 
\end{align}
}

\medskip

\noindent{{\textbf{Step~2.}} \emph{A bound for $\delta_\Omega(F(z))$ in \eqref{E:F'(z)-upper-bound}.}}
\smallskip

\noindent{Since $\Omega$ is a regular strongly hyperconvex Lipschitz domain, there exists a Lipschitz
continuous plurisubharmonic function
$\rho$ on a neighbourhood $\Omega'$ of $\overline\Omega$ such that $\rho<0$ on $\Omega$,
$\rho\vert_{\bdy\Omega}\equiv0$, and satisfies all other conditions in Definition~\ref{D:SHL-domain}-$(b)$.
Now, $\Omega$ satisfies a uniform interior-cone condition. This is a consequence of $\bdy\Omega$
being a Lipschitz manifold. Establishing this involves a standard argument and, so, we shall just
briefly outline the steps involved:
\begin{itemize}
  \item If we fix $q\in \bdy\Omega$, then, in the notation of Definition~\ref{D:lipschitz}, to each
  $\varw \in \unitary^q(\mathcal{V}_q\cap \Omega)$, let us associate the point
  \[
    \xi_{\varw} := \big(\varw', \re(\varw_n)+i\varphi_{q}(\varw', \re(\varw_n))\big)\in
    \unitary^{q}(\mathcal{V}_q\cap \bdy\Omega),
  \]
  where $\varw$ will denote points in $\unitary^{q}(\mathcal{V}_q)$ (hence, in a small neighbourhood of
  $0$) and\linebreak
  $\varw =: (\varw', \varw_n)$.
  \item There exists a small neighbourhood $W_q$ of $0$, $W_q\Subset \unitary^{q}(\mathcal{V}_q)$, and
  constants $s_q>0$ and $\theta_q\in (0, \pi)$ such that the truncated cone
  \[
    \big(\xi_{\varw}+\Gamma(\theta_q, (0,\dots,0,i))\big)\cap \mathbb{B}^n(\xi_{\varw}, s_q) \subset
    \unitary^{q}(\mathcal{V}_q\cap \Omega) 
  \]
  for each $\varw\in W_q\cap \unitary^{q}(\Omega)$, where $\theta_q$ is determined by the
  Lipschitz constant of $\varphi_q$.
  \item By compactness of $\bdy\Omega$, there exist $q_1,\dots, q_N\in \bdy\Omega$ such
  that $\bigcup_{j=1}^N(\unitary^{q_j})^{-1}(W_{q_j})\supset \bdy\Omega$ and, for each $x\in K$, one can
  produce the (truncated) cones that satisfy the condition stated in
  Definition~\ref{D:unif_cone_condn}, for
  \[
    K = \Omega\setminus \bigcup_{1\leq j\leq N}(\unitary^{q_j})^{-1}(W_{q_j}),
  \]
  with the parameters $\theta = \min\{\theta_{q_1},\dots, \theta_{q_N}\}$ and
  $r_0 = \min\{s_{q_1},\dots, s_{q_N}\}$.
\end{itemize}
As $\Omega$ satisfies a uniform interior-cone condition, by
Result~\ref{R:hopf-lemma}, there exist constants $c_0>0$ and $\alpha>1$ such that 
\begin{equation}\label{E:hopf-lemma}
 \rho(w)\leq -c_0(\delta_\Omega(w))^\alpha\quad\forall w\in\Omega.   
\end{equation}
Since $\rho$ is continuous and plurisubharmonic on $\Omega$, 
$\rho\circ F:D\lrarw(-\infty,0)$ is a continuous plurisubharmonic function. We will
show that $\rho\circ F$ extends continuously to $\overline D$. Let $p'\in\bdy D$ and
$(x_\nu)_{\nu\geq 1}\subset D$ be a sequence such that $x_\nu\lrarw p'$ as $\nu\to\infty$. Let
$(x_{\nu{_k}})_{k\geq 1}$ be an arbitrary subsequence of $(x_\nu)_{\nu\geq 1}$. Since $F$ is proper and
$\Omega$ is bounded, there exist a subsequence $(x_{\nu_{k_{l}}})_{l\geq 1}$ and $q'\in\bdy\Omega$ such 
that $F(x_{\nu_{k_{l}}})\lrarw q'$ as $l\to\infty$. Therefore, since $\rho(q')=0$, 
$\rho\circ F(x_{\nu_{k_{l}}})\lrarw 0$ as $l\to\infty$. 
Hence, every subsequence of $(\rho\circ F(x_\nu))_{\nu\geq 1}$ has a subsequence
that converges to $0$, whence we conclude that $\rho\circ F(x_\nu)\lrarw 0$ as $\nu\to\infty$.  
Thus, the function 
$$\widetilde \rho(z)\coloneqq \begin{cases}
  \rho\circ F(z), & \text{if }z\in D,\\
  0,&\text{if }z\in\bdy D.\end{cases}$$
is a continuous function on $\overline D$.
\smallskip

Since $\rho\circ F$ is a continuous plurisubharmonic function, $(dd^c(\rho\circ F))^m$ is defined in the sense of
currents. Write:
\[
  (dd^c(\rho\circ F))^m=\widetilde f\beta_m.
\]
By Proposition~\ref{P:L^r}, we see that $\widetilde{f}$ is non-negative a.e. and that
$\widetilde{f}\in\leb{p/m}(D, \Leb_{2m}) $. Therefore, since $p/m>1$ and since $D$ is strongly hyperconvex Lipschitz, by Result~\ref{R:charabati}, there exists a
unique plurisubharmonic solution to the Dirichlet problem
\begin{align*}
  (dd^c{u})^m &=\widetilde f\beta_m, \; 
  \text{ $u\in \smoo(\overline{D})\cap {\sf psh}(D)$}, \notag \\
  u|_{\bdy D} &=0,
\end{align*}
for the complex Monge--Amp{\`e}re equation and this belongs to
$\smoo^{0,s_0}(\overline{D})$ for some $s_0\in (0,1)$. Since, by the above discussion,
$\widetilde\rho$ is a solution to the above Dirichlet problem, by
uniqueness, it follows that $\widetilde{\rho}\in\smoo^{0,s_0}(\overline{D})$. By the $\smoo^{0,s_0}$ property, 
there exists a constant $C_0>0$ such that for every $z\in D$ we have 
\begin{align}
 -\rho(F(z))= -\widetilde\rho(z)=|\widetilde\rho(z)|&\leq C_0(\delta_D(z))^{s_0}\notag\\
  \implies \delta_\Omega(F(z))&\leq C_1(\delta_D(z))^{s_*},\label{E:bdy-om-bdy-D-ineq}
\end{align}
where $C_1:=(C_0/c_0)^{1/\alpha}>0$, $s_*:=s_0/\alpha\in(0,1)$, and the last inequality follows from \eqref{E:hopf-lemma}. 
}
\medskip
\pagebreak

\noindent{{\textbf{Step~3.}} \emph{A Hardy--Littlewood-type argument.}}
\smallskip

\noindent{Combining \eqref{E:F'(z)-upper-bound}, \eqref{E:bdy-om-bdy-D-ineq}, and from
Lemma~\ref{L:lips-bdy-ineq}, it follows that there exists a neighbourhood $V$ of $p$ (recall that $p$ is
as introduced at the beginning of this proof) and a unitary transformation $\unitary^p$ such that
\begin{align}
  \|F'(z)v\|\leq\frac{ C_1^{s/2}}{M}\frac{\|v\|}{(\delta_D(z))^{\wt s}}
  \leq M^*\frac{\|v\|}{(\vary(\unitary^p(z)))^{\wt s}}\quad\forall z\in V\cap D,\quad\forall v\in\C^m,
  \label{E:F'(z)-up-bound-dini}
\end{align}
where $\wt s:=1-(ss_*/2)\in(0,1)$ and $M^*:= (C^{\wt s}C_1^{s/2})/M>0$. From this stage, the argument will 
resemble, in part, the proof of \cite[Theorem~1.5]{banik:lcephmlritb24}\,---\,which itself is a
Hardy--Littlewood-type argument in a non-smooth setting\,---\,so we will be brief. Let us define $\mathcal D:=\unitary^p(D)$, $\mathcal V:=\unitary^p(V)$,
$\varf=(\varf_1,
\varf_2,\dots,\varf_n):=F\circ ({\unitary^p})^{-1}\vert_{\mathcal D}$, and $\varz:=\unitary^p(z)$: a new
coordinate system in $\C^m$. 
From \eqref{E:F'(z)-up-bound-dini}, by applying the chain rule we have
\begin{equation}\label{E:f'-up-bound-w}
  \|\varf'(\varz)v\|\leq \tau(\vary(\varz))\|v\|\quad\forall \varz\in \mathcal V\cap 
  \mathcal D,\quad\forall v\in\C^m, 
\end{equation}
where $\tau(x):=M^*/x^{\wt s}$ for $x> 0$.
There exists a neighbourhood $\mathcal U\Subset \mathcal V$ of $0\in\C^m$ and a constant $\delta>0$ such that 
\begin{equation}\label{E:delta-nbhd}
  \left(\bigcup_{\xi\in \mathcal U\cap\bdy \mathcal D}\overline{\mathbb{B}^m(\xi,\delta)}\right)\cap
  \mathcal D\subset \mathcal V\cap \mathcal D.
\end{equation}
Let $0<t<t'<\delta$ and $v_0:=(0,\dots,0,i)\in\C^m$. Since $\tau$ is a non-negative Lebesgue
integrable function, by \eqref{E:f'-up-bound-w} and the Fundamental Theorem of Calculus, we can conclude that for every $\xi\in 
\mathcal U\cap\bdy \mathcal D$ and every $j:1\leq j\leq n$, the limit
\begin{equation}\label{E:funda-thm-calc}
  \varlf_{\!\!\!j}(\xi):=\varf_j(\xi+t'v_0)-\lim_{t\to 0^+}\int_t^{t'} i\frac{\bdy \varf_j}
  {\bdy\varz_m}(\xi+xv_0)\,dx
\end{equation}
exists and is independent of $t'$. 
Now define $\mscrf{{p}} =(\mscrf{{p},1}, \mscrf{{p},2} ,\dots,\mscrf{{p},n}):
(\mathcal U\cap\bdy \mathcal D)\cup
\mathcal D\lrarw\overline\Omega$ by
$$\mscrf{{p}}(z)\coloneqq \begin{cases}
  \varlf(\varz)=(\varlf_{\!\!\!1}(\varz),\dots,\varlf_{\!\!\!n}(\varz)),&\text{if }\varz\in \mathcal U\cap\bdy \mathcal D,\\
  \varf(\varz), & \text{if }\varz\in \mathcal D.\end{cases}$$
We will show that $\mscrf{{p}}$ is continuous. In particular, it suffices to show that
$\mscrf{{p}}$ is continuous
on $\mathcal U\cap\bdy \mathcal D$.}
\smallskip

Let $\varepsilon>0$ be given. Since $\tau$ is Lebesgue integrable, there exists
$\kappa\in(0,\delta)$ such that $\int_0^\kappa\tau(x)\,dx< \varepsilon/3$. Hence, from 
\eqref{E:funda-thm-calc} and \eqref{E:f'-up-bound-w}, it follows that
\begin{equation}\label{E:F-uniform-ctn}
  |\varlf_{\!\!\!j}(\xi)-\varf_j(\xi+\kappa v_0)|\leq \int_0^\kappa\tau(\vary(\xi+ x v_0))\,dx
  = \int_0^\kappa\tau(x)\,dx<\varepsilon/3\quad\forall\xi\in\mathcal U\cap\bdy \mathcal D.
\end{equation}
Note that, with our choice of $\kappa$, the above estimate is independent of $\xi \in 
{\mathcal U}\cap \bdy{\mathcal D}$. Since $\kappa\in (0, \delta)$, by \eqref{E:delta-nbhd}, the
set $S(\kappa) := \{\xi + \kappa v_0: \xi \in {\mathcal U}\cap \bdy{\mathcal D}\}$ is relatively compact
in ${\mathcal D}$. As ${\sf F}|_{S(\kappa)}$ is uniformly continuous, it is easy to see\,---\,this is the
essence of the classical ``Hardy--Littlewood trick''\,---\,that there exists an $r = r(\eps)>0$ such that,
if $\|\xi_1-\xi_2\|<r$, $\xi_1, \xi_2\in {\mathcal U}\cap \bdy{\mathcal D}$, then
$|\varlf_{\!\!\!j}(\xi_1)-\varlf_{\!\!\!j}(\xi_2)|<\eps$ for $j=1,\dots, n$. We conclude that $\varlf$ is
uniformly continuous on $\mathcal U\cap\bdy \mathcal D$.
\smallskip

Now fix $\xi=\left(\xi', \zeta+i\psi_p(\xi',\zeta)\right)\in\mathcal U\cap\bdy \mathcal D$, $\zeta\in \R$,
where $\psi_p$ is as in Lemma~\ref{L:lips-bdy-ineq}. Let
$(\varz_\nu)_{\nu\geq 1}\subset(\mathcal U\cap\overline{\mathcal D})\setminus\{\xi\}$ be an
arbitrary sequence such that $\varz_\nu\lrarw\xi$ as $\nu\to\infty$. Let us define
$${\sf Z}_\nu\coloneqq \begin{cases}
  \varz_\nu, & \text{if }\varz_\nu\in\bdy\mathcal D,\\
  \varz_\nu,&\text{if }\varz_\nu=\big(\xi', \zeta+i(x+\psi_p(\xi',\zeta))\big)\,\,\text{for some $x>0,$}\\
  \pi(\varz_\nu),  &\text{otherwise},\end{cases}$$
where $\pi(\varz)=\pi(\varz',\varz_m):=\left(\varz',\re\varz_m+i\psi_p(\varz',\re\varz_m)\right)$. 
Clearly, $\pi$ is
continuous, and hence, ${\sf Z}_\nu\lrarw\xi$ as $\nu\to\infty$. Therefore, from 
\eqref{E:F-uniform-ctn}, and from the fact that $\varlf$ is uniformly
continuous on $\mathcal U\cap\bdy \mathcal D$, we have 
\[
  \lim_{\nu\to\infty}\mscrf{{p}}(\varz_\nu) =
  \lim_{\nu\to\infty}\mscrf{{p}}({\sf Z}_\nu)=\varlf(\xi)=\mscrf{{p}}(\xi).
\]
As $\xi\in \mathcal U\cap\bdy \mathcal D$ was arbitrarily chosen (and as $(\varz_\nu)_{\nu\geq 1}$, with the
stated properties, was arbitrary) we conclude that $\mscrf{{p}}$ is continuous. Hence,
since $\unitary^p$ is an automorphism of $\C^m$, $F$ extends continuously to $(U_p\cap\bdy D)\cup 
D$, where $U_p:=(\unitary^p)^{-1}(\mathcal U)$ is a neighbourhood of $p$. Thus, in view of the
discussion at the beginning of this proof, the result follows.
\end{proof}

Next, we provide:

\begin{proof}[The proof of Theorem~\ref{T:proper-map_str-pseud}]
Since, by definition, $D$ and $\Omega$ have $\smoo^2$-smooth boundaries, $\bdy{D}$ and $\bdy\Omega$
are Lipschitz manifolds. Since $D$ and $\Omega$ are strongly pseudoconvex, they admit $\smoo^2$-smooth
defining functions that are strongly plurisubharmonic. Thus\,---\,see \cite[Section~2]{charabati:hrscmae15}\,---\,both
$D$ and $\Omega$ are \emph{regular} strongly hyperconvex Lipschitz domains. Hence, all the conditions in the
hypothesis of Theorem~\ref{T:proper-map} are satisfied, from which the result follows.  
\end{proof}

\smallskip

\section{The proofs of Theorem~\ref{T:complex-geodesic} and Corollary~\ref{C:strongly-convex-geodesic}}
\label{S:complex-geodesic_proofs}
To prove Theorem~\ref{T:complex-geodesic}, we will need the following result:
\begin{result}[Mercer, {\cite{mercer:cgigmcd93}}]\label{r:Mercer}
 Let $\Omega$ be a bounded convex domain in $\C^n$ and let $\psi:\D\lrarw\Omega$ be a
complex geodesic. There exists a constant $\beta > 1$ and constants $C_1, C_2 > 0$ such that
\[
  C_1(1-|\zeta|)\leq \delta_\Omega(\psi(\zeta))\leq C_2(1-|\zeta|)^{1/\beta}\quad \forall\zeta\in\D.
\]
\end{result}

We are now in a position to give

\begin{proof}[The proof of Theorem~\ref{T:complex-geodesic}]
Since $\Omega$ is bounded and $\C$-strictly convex, by Proposition~\ref{P:C-strict_convex}, $\Omega$ is $B$-regular.
Therefore, since $\omega$, as given in the hypothesis, is a modulus of continuity of the canonical function, by
Proposition~\ref{P:monge-ampere}, we conclude that there exists a constant $c>0$ 
such that
\[
  k_{\Omega}(z; v) \geq 
  c\frac{\|v\|}
            {\big(\omega(\delta_{\Omega}(z))\big)^{1/2}}\quad\forall z\in\Omega.   
\]
Let $\psi:\D\lrarw\Omega$ be a complex geodesic. By the previous inequality, for every $\zeta\in\D$, we have
\begin{align*}
  c\frac{\|\psi'(\zeta)\|}{\big(\omega(\delta_{\Omega}(\psi(\zeta)))\big)^{1/2}}&\leq k_{\Omega}(\psi(\zeta); \psi'(\zeta))\leq k_\D(\zeta;1)=\frac1{1-|\zeta|^2}\leq\frac{1}{1-|\zeta|}.
\end{align*}
Since $\omega$ is monotone increasing, by Result~\ref{r:Mercer} we have
\[
  \omega\big(\delta_{\Omega}(\psi(\zeta))\big)\leq \omega\left(C_2(1-|\zeta|)^{1/{\beta}}\right)\quad\forall\zeta\in\D. 
\]
Now combining the last two inequalities we get
\begin{align}\label{E:kob-up-low-geod}
   \|\psi'(\zeta)\|\leq \frac1{c}\frac{\left(\omega(C_2\left(1-|\zeta|)^{1/{\beta}}\right)\right)^{1/2}}{1-|\zeta|}
   \quad\forall\zeta\in\D.   
\end{align}
Let $s:=1/\beta$. Define a function $\tau:[0,1)\lrarw[0,\infty]$ as follows:
$$\tau(x)\coloneqq \begin{cases}
  {\sqrt{\omega\left(C_2 x^s\right)}}/{cx},&\text{if }0<x<1,\\
  \infty, & \text{if }x=0.\end{cases}$$
Write $\psi=(\psi_1,\psi_2,\dots,\psi_n)$. From \eqref{E:kob-up-low-geod}, it follows that
\begin{equation}\label{E:up-phi-hardy}
  |\psi_j'(\zeta)|\leq \tau(1-|\zeta|) \quad \forall \zeta\in \D \; \; \text{and} \; \; \forall
  j:1\leq j\leq n.
\end{equation}
Since $\sqrt{\omega}$ satisfies a Dini condition, $\int_0^\varepsilon(\sqrt{\omega(x)}/x)\,dx<\infty$ for every $\varepsilon>0$. 
From this, by using a change of variables formula for the Lebesgue integral, 
it is elementary to see that $\tau$ is of class $\leb{1}([0,1), \Leb_1)$. Therefore, by
Result~\ref{r:bharali} and~\eqref{E:up-phi-hardy}, we conclude that for every 
$j:1\leq j\leq n$, $\psi_j$ extends continuously to $\bdy\D$. Hence, the result follows.
\end{proof}

We now undertake the proof of Corollary~\ref{C:strongly-convex-geodesic}.

\begin{proof}[The proof of Corollary~\ref{C:strongly-convex-geodesic}]
Let $\Omega_1,\dots,\Omega_N$ be bounded strongly convex domains in $\C^n$ such that $\Omega:=\bigcap_{j=1}^N\Omega_j$ is non-empty.
Clearly, $\Omega$ is a bounded $\C$-strictly convex domain in $\C^n$. 
Let $\rho_j$ be a defining function of class $\smoo^2$ for $\Omega_j$ such that the real Hessian of $\rho_j$ is strictly
positive definite at each point in ${\sf dom}(\rho_j)$. Since $\Omega_j$ is strongly convex, such a defining 
function always exists; see, for instance \cite[Lemma~3.1.4]{krantz:ftscv92}. Each $\rho_j$ satisfies all the 
properties in Definition~\ref{D:SHL-domain}-$(a)$. Now define the function $\rho:\C^n\lrarw\R$ by
\[
  \rho(z):=\max_{j:1\leq j\leq N}\rho_j(z).
\]
Then, $\rho$ is a Lipschitz plurisubharmonic function. It is elementary to see that $\rho$ satisfies both conditions in 
Definition~\ref{D:SHL-domain}-$(a)$. Hence, $\Omega$ is a strongly hyperconvex Lipschitz domain. Therefore, 
by its definition and by 
Result~\ref{R:charabati}, the canonical function for $\Omega$ belongs to
$\smoo^{0,s}(\overline{\Omega})$ for some $s\in (0,1)$. 
Now define $\omega(x):=x^s$ for $x\in[0,\infty)$. Since $\sqrt{\omega}$ satisfies a Dini condition, the result follows 
from Theorem~\ref{T:complex-geodesic}-$(b)$. 
\end{proof}
\smallskip

\section*{Acknowledgements}
R. Masanta is supported by a scholarship from the Prime
Minister's Research Fellowship (PMRF) programme (fellowship no.~0201077). Both G. Bharali and R. Masanta are
supported by a DST-FIST grant (grant no.~DST FIST-2021 [TPN-700661]).
\medskip

\end{document}